\documentclass{amsart}[12 pt]
\usepackage{amsfonts}
\usepackage{amsmath}
\usepackage{amsthm}
\usepackage{amssymb}
\usepackage[pdftex]{hyperref}
\usepackage{url}
\newtheorem{thm}{Theorem}[section]
\newtheorem{defn}{Definition}[section]
\newtheorem{lemma}{Lemma}[section]
\newtheorem*{ack}{Acknowledgements}
\newtheorem{prop}{Proposition}[section]

\newtheorem{cor}{Corollary}[section]
\newtheorem{remark}{Remark}[section]
\newcommand{\bb}{\mathbb}
\newcommand{\delbar}{\bar{\partial}}

%opening
\title{Contact homology of $S^1$-bundles over some symplectically reduced orbifolds}
\author{Justin Pati}
\address{Department of Mathematics and Statistics\\ 
MSC03 2150 1 University of New Mexico\\ 
Albuquerque, New Mexico\\
87131-0001}
\email{jpati@math.unm.edu}
\begin{document}

\begin{abstract}
In this paper, we compute contact homology of some quasi-regular contact structures, which admit Hamiltonian actions of Reeb type of Lie groups.  We will discuss the toric contact case, (where the torus is of Reeb type), and the case of homogeneous contact manifolds.
In both of these cases the quotients by the Reeb action are K\"{a}hler and admit perfect Morse-Bott functions via the moment map.  It turns out that the contact homology depends only on the homology of the symplectic base and the bundle data of the contact manifolds as a circle bundle over the base.  Moreover we can identify the relevant holomorphic spheres in the bases which lift to gradient trajectories of the action functional on the loop space of the symplectization of M.  In order to include all toric contact manifolds we extend a result of Bourgeois to the case of circle bundles over symplectic orbifolds.  In particular we are able to distinguish contact structures on many contact manifolds all in the same first Chern class of $2n$-dimensional plane distributions.    
\end{abstract}

\maketitle

\section{Introduction}
Like symplectic manifolds, contact manifolds have no local invariants.  Darboux's theorem tells us that locally all contact structures are the same. Moreover Gray stability tells us that there is no deformation theory.  Nonetheless there are contact manifolds which are not contactomorphic.  
Sometimes one can distinguish these different contact structure via the Chern classes of the underlying symplectic vector bundle defined by the contact distribution.  However this is insufficient.  In ~\cite{Gi1} Giroux shows that the contact structures $$\xi_{n} = ker(\alpha_{n} =cos(n\theta )dx + sin(n\theta)dy)$$ are pairwise noncontactomorphic.  Moreover $c_{1}(\xi_{n}) = 0$ for all $n$.

Calculations of this type have been entirely dependent on the specific geometric conditions of the example.  However, we now have contact homology, a small piece of the symplectic field theory ~\cite{EGH} of Elisahberg, Givental, and Hofer.  Using this powerful tool Ustilovsky ~\cite{Ust} was able to find infinitely many exotic contact structures on odd dimensional spheres all in the same homotopy class of almost complex structures.  Similarly Otto Van Koert, in his thesis ~\cite{OVK}, made a similar calculation for a larger class of Brieskorn manifolds using the Morse-Bott form of the theory.  It should be mentioned that it came to the author's  attention upon completion of this work that Miguel Abreu, using different methods, has, indepentently, computed a general formula for contact homology for toric contact manifolds with $c_{1}(\xi)=0.$    

The ideas in this paper were originally motivated by examples related to a question of Lerman about contact structures on various $S^{1}$-bundles over $\bb{CP}^1 \times \bb{CP}^1$ ~\cite{Ler1}.  We are now able to distinguish these structures essentially using an extension of a theorem of Bourgeois for $S^1$-bundles over symplectic manifolds which admit perfect Morse functions ~\cite{Bourg1}.  
\begin{thm}[Bourgeois]
Let $(M, \omega)$ be a symplectic manifold with $[\omega] \in H_2 (M, \bb{Z})$  satisfying $c_1(TM) =\tau[\omega]$ for some $\tau \in \bb{R}.$ Assume that $M$ admits a perfect Morse function. Let $V$ be a Boothby-Wang fibration over $M$ with its natural contact structure.  Then contact homology $HC^{cylindrical}_{∗} (V, \xi)$ is the homology of the chain complex generated by infinitely many copies of $H_{*}(M, \bb{R})$, with degree shifts $2 c k - 2$, $k \in \bb{N},$ where $c$ is the first Chern class of $T(M)$ evaluated on a particular homology class. The differential is then given in terms of the Gromov-Witten potential of $M.$
\end{thm}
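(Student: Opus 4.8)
The plan is to realize $HC^{cylindrical}_*(V,\xi)$ through the Morse--Bott machinery for contact homology, exploiting the very special Reeb dynamics of a Boothby--Wang fibration. First I would fix the contact form: since $[\omega]$ is integral, the Boothby--Wang construction produces a principal $S^1$-bundle $\pi\colon V\to M$ together with a connection one-form $\alpha$ whose curvature is $\pi^*\omega$, so that $d\alpha=\pi^*\omega$ and $\alpha$ is contact with Reeb field $R$ generating the $S^1$-action. Consequently every closed Reeb orbit is a fiber traversed $k$ times, and the closed orbits of a fixed period organize into a manifold $N_k\cong M$. The action functional is constant on each $N_k$, so the orbits occur in Morse--Bott families parametrized by the base.

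With this in hand I would invoke Bourgeois's Morse--Bott perturbation scheme: choose an auxiliary Morse function $f_k$ on each family $N_k\cong M$ and perturb $\alpha$ so that, to first order, the only surviving orbits are the critical points of the $f_k$. Because $M$ carries a perfect Morse function, I take each $f_k$ perfect, whence the critical points on the $k$-th copy are in bijection with a basis of $H_*(M,\bb{R})$. This immediately yields the chain groups: one copy of $H_*(M,\bb{R})$ for every $k\in\bb{N}$.

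The grading step is a linear-algebra computation of the transverse Conley--Zehnder (Robbin--Salamon) index of the family $N_k$. Trivializing $\xi$ along a fiber by means of the $S^1$-action and the connection, the linearized Reeb flow rotates the symplectic normal bundle at a rate fixed by the curvature $\omega$, and the hypothesis $c_1(TM)=\tau[\omega]$ makes this rotation uniform; integrating over the $k$-fold orbit gives an index growing linearly in $k$ with slope determined by $c_1(TM)$ paired with the relevant spherical class. Adding the Morse index of a critical point $p$, which supplies the internal grading on each $H_*(M,\bb{R})$, to this transverse index together with the standard dimensional shift $\tfrac12\dim N_k$ produces an overall degree shift of $2ck-2$ for the $k$-th copy; here one must also check that all iterates are good orbits, so that no generators are discarded.

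Finally the differential. In the Morse--Bott model it counts cascades: gradient trajectories of the $f_k$ glued to rigid $J$-holomorphic cylinders in the symplectization $\mathbb{R}\times V$. The geometric heart of the argument is that, for a cylindrical $J$ adapted to the connection, such a cylinder projects to a $J_M$-holomorphic sphere in $(M,\omega)$ with its two punctures mapping to the points lying over the asymptotic orbits; conversely a sphere in a class $A$ lifts, the difference of the wrapping numbers at the two ends being $\langle[\omega],A\rangle$. Identifying the compactified moduli of such cylinders with spaces of spheres meeting prescribed constraints, and matching their virtual dimensions with the index count above, expresses the matrix coefficients of the differential as Gromov--Witten invariants of $M$; assembling them over all classes $A$ reproduces the Gromov--Witten potential. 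The main obstacle is precisely this last correspondence: one must carry out the asymptotic and gluing analysis relating cascades to holomorphic spheres, secure transversality for these Morse--Bott moduli spaces, and verify the bookkeeping of marked points and covering multiplicities so that the cascade count agrees term by term with the derivatives of the potential.
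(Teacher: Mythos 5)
Your sketch is essentially correct and follows the same route the paper takes: the paper does not prove this theorem itself (it is quoted from Bourgeois's thesis \cite{Bourg1}), but the Morse--Bott machinery it develops in Sections 3--4 for the orbifold generalization --- orbit spaces $S_{T_k}\cong M$, the perturbation $\alpha_f=(1+\epsilon f)\alpha$ by a perfect Morse function, Maslov indices computed by capping with spheres and integrating $c_1(TM)$, and the correspondence between cylinders in the symplectization and holomorphic spheres in the base --- is exactly the argument you outline. The only caveats are bookkeeping ones: the orbit space is $N_T/S^1\cong M$ rather than $N_T$ itself, the paper's grading formula is $\mu(S_{T_k})-\tfrac12\dim(S_{T_k})+\mathrm{ind}_p f$ (a subtraction, not an addition), and the real role of the monotonicity hypothesis $c_1(TM)=\tau[\omega]$ is to make $c_1(\xi)=\pi^*c_1(TM)$ vanish over $\bb{R}$ so the grading is independent of the capping surface, rather than to make the linearized rotation uniform.
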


This theorem exploits the fact that in the case of $S^{1}$-bundles the differential in contact homology is especially simple since there is essentially one type of orbit for each multiplicity (ie., simple orbits can be paramatrized so that their periods are all $1$).  Perfection of the Morse function kills the Morse-Smale-Witten part of the differential in Morse-Bott contact homology, so the chain complex reduces to the homology of $M.$  The grading in contact homology comes from the fact that the index calculations may be made via integration of $c_{1}(TM)$ over certain spherical two dimensional homology classes.  In the case of simply connected reduced spaces the cohomology ring of the base has a particularly nice form in terms of $2$ dimensional cohomology classes, obtained from the moment map as a Morse function.  Moreover all the two dimensional homology of the base in these cases is generated by spheres.  This fact makes Maslov/Conley-Zehnder indices easier to compute without the need to find a stable trivialization of the contact distribution.  In this way we are able to extend the theorem of Bourgeois.  For homogeneous spaces the above theorem works ``out of the box'', but we must allow for non-monotone bases, ie., $c_{1}(\xi) \neq 0$.  To generalize the idea to general toric contact manifolds it is necessary to consider symplectic bases (of the circle bundle) which are orbifolds.\footnote{It would be interesting to determine conditions under which the Reeb vector field could be perturbed so that the quotient of the Reeb action becomes an honest manifold when the contact manifold is simply connected.  There are known counter-examples, even in the toric case.  It would be interesting to find SFT obstructions to this.} The main new idea of this paper is to formalize this process for index computation from ~\cite{Bourg1} and extend the result to an orbifold base, ie., the case where Reeb action is only locally free.     Moreover we can extend this to orbifolds bases which also admit a Hamiltonian action, since over $\bb{C}$ their cohomology ring is still a polynomial ring in $H^{2}$ with spherical representatives of the ``diagonal'' homology classes.  
\begin{thm} \label{thm:main}
 Let $M$ be a contact manifold, which is an $S^1$ bundle over the symplectic orbifold $\mathcal{Z},$ where $\mathcal{Z}$ admits a strongly Hamiltonian action of a compact Lie group.  Suppose that the curvature form, $d \alpha$ of $M$ as a circle bundle over $\mathcal{Z}$ is given by $$\sum w_{j}\pi^{*}c_{j},$$ where $c_{j}$ are the Chern classes associated to the Hamiltonian action.  Assume that the $c_{j}$ generate $H^{*}(\mathcal{Z};\bb{C})$ as variables in a truncated polynomial ring.  Let $\tilde{w_{j}}$ be the coeffecient of $c_{j}$ in $c_{1}(T(\mathcal{Z})).$  Assume further regularity of the $\delbar_{J}$-operator, and that $\sum_{j} \tilde{w_{j}} >1.$  Then contact homology is generated by copies of the homology of the critical submanifolds for any of the Morse-Bott functions given by the components of the moment map, with degree shifts given by twice-integer multiples of the sums of the $\tilde{w_{j}}$ plus the dimension of the stratum, $S$, of $\mathcal{Z}$ in which the given Reeb orbit is projected under $\pi.$ 
\end{thm}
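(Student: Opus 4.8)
The plan is to run the Morse-Bott formalism for cylindrical contact homology, in the form developed by Bourgeois and van Koert, directly on the Seifert fibration $\pi : M \to \Z$, treating the locally free Reeb circle action as an orbifold analogue of a Boothby--Wang fibration and thereby extending Theorem~1.1 to the orbifold category. First I would organize the generators. Because $d\alpha = \sum_j w_j \pi^* c_j$ and the Reeb vector field generates the (locally free) $S^1$-action, every closed Reeb orbit is a fiber, and for each multiplicity these orbits sweep out a copy of $\Z$. Stratifying $\Z$ by isotropy, the orbits lying over a fixed stratum $S$ form a Morse-Bott family whose underlying critical manifold, after composing with a component $\mu_i$ of the moment map, is a critical submanifold of $\mu_i$. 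Since the action is strongly Hamiltonian on a K\"ahler orbifold, each $\mu_i$ is a Morse-Bott function with only even-index critical submanifolds, hence perfect (Atiyah--Frankel--Kirwan, in the orbifold form); this is the orbifold substitute for the perfect Morse function in Bourgeois's hypothesis, and it lets me replace the full Morse-Bott chain complex by $H_*(\text{critical submanifolds})$, one copy per multiplicity.

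The second step is the grading. Here I would exploit the hypothesis that the $c_j$ generate $H^*(\Z;\bb{C})$ as a truncated polynomial ring with spherical generators in degree two: this guarantees that every relative class used to cap a Reeb orbit has a spherical representative, so the Conley--Zehnder index can be computed by integrating $c_1(T\Z) = \sum_j \tilde w_j\, c_j$ over that class, without trivializing $\xi$ globally. For an orbit of multiplicity $k$ projecting into the stratum $S$, the linearized return map splits into fiber and base directions; integrating $c_1$ over the $k$-fold capping sphere yields a term proportional to $k\sum_j \tilde w_j$, while the Morse-Bott clean-intersection correction contributes the dimension of the critical submanifold, namely $\dim S$. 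Assembling these produces the asserted degree shift as a twice-integer multiple of $\sum_j \tilde w_j$ plus $\dim S$. The regularity hypothesis on $\delbar_J$ is invoked precisely so that these index counts equal the actual dimensions of the relevant moduli spaces.

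The third step is to show that the differential degenerates, so that the homology coincides with the chain complex just described. Perfection of the moment-map functions kills the Morse--Smale--Witten part of the Morse-Bott differential, leaving only genuine holomorphic cylinders in the symplectization, which project to holomorphic spheres in $\Z$ carrying the two marked points coming from their asymptotic ends. I would identify these spheres with the degree-two generators furnished by the truncated polynomial structure, exactly as Bourgeois does, and then invoke $\sum_j \tilde w_j > 1$: this says the minimal Chern number of $\Z$ exceeds one, so any nonconstant class $A$ has $\langle c_1(T\Z), A\rangle = \sum_j \tilde w_j \langle c_j, A\rangle$ large enough that the constrained moduli space has Fredholm index exceeding one. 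No rigid cylinder can then connect distinct generators, so $\partial = 0$ and $HC^{cylindrical}_*(M,\xi)$ is the claimed free module.

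The main obstacle I anticipate is the orbifold bookkeeping over the singular strata. Away from the principal stratum the isotropy groups $\bb{Z}_m$ make the Reeb orbits rationally rather than integrally parametrized, so both the multiplicity filtration and the Conley--Zehnder index acquire fractional corrections that must be reconciled with the integer degree shifts in the statement; one must verify that the $c_1$-integrals over orbi-spheres and the clean-intersection correction still assemble into the stated formula. Equally delicate is erecting the Morse-Bott and transversality package in the Seifert category so that Bourgeois's gluing and index identities survive — the regularity assumption on $\delbar_J$ is the hypothesis under which I would take this for granted, but discharging it honestly at the orbifold points, where the linearized operator sees the isotropy, is where the genuine work lies.
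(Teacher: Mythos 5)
Your overall strategy coincides with the paper's: generators are the Morse-Bott families of fibers over the isotropy strata, graded by integrating $c_1(T\mathcal{Z})=\sum_j\tilde w_j c_j$ over spherical classes supplied by the truncated-polynomial-ring hypothesis; perfection of the moment map components kills the Morse--Smale--Witten part of the differential; and the hypothesis $\sum_j\tilde w_j>1$ both makes cylindrical contact homology well-defined (no orbits of degree $0,\pm1$) and rules out rigid non-trivial cylinders by an index count. So far this is the paper's argument.

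The gap is that you explicitly defer the one step that constitutes the paper's actual new content: the orbifold Maslov index computation. You write that "one must verify that the $c_1$-integrals over orbi-spheres and the clean-intersection correction still assemble into the stated formula," but that verification is the key lemma, not an afterthought. The paper discharges it concretely: for a Reeb orbit of multiplicity $m$ in the stratum $S_{T_j}$ it builds two capping surfaces --- the constant-framing disk from a tubular neighborhood of the orbit (Maslov index $0$) and the disk coming from a holomorphic orbisection of $L$ with a zero of order $m$ over a sphere representing the "diagonal" class $A_{S_{T_j}}$ chosen so that every generator $\tilde c_i$ pairs nontrivially with it --- and then applies the loop property of the Maslov index to identify the index with $2m\langle c_1^{orb}(T(S_{T_j})),A_{S_{T_j}}\rangle$. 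The fractional corrections you worry about are then handled by Satake's formula, which expresses the orbifold Chern integral as a sum over strata weighted by $1/|\Gamma_j|$, and integrality is recovered because the pairing can be re-read as the evaluation of an integral form upstairs on $M$. Without this lemma your degree shifts are only asserted, and in particular you have no argument that the shifts are the integers claimed in the statement rather than rational numbers; everything downstream (the well-definedness of the cylindrical complex and the vanishing of the differential) depends on those integer values.
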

As corollaries we obtain contact homology for both toric and homogeneous contact manifolds.  The reader should also beware that such calculations are only formal without some sort of transversality of the $\delbar_{J}$ operator.  Though several such statements and proofs have been published there are still certain persistent gaps.  There are many current developments around this issue and the underlying analysis.  Without such a result there is no way to know if the counts that we make are actually correct.  In section $3$ we give an alternate argument for transversality for homogeneous contact manifolds and for toric contact manifolds using only elementary tools from algebraic geometry, this is possible only because the almost complex structures involved are integrable in these cases.

\begin{ack}
 I would like to thank Charles Boyer for his patience, interest, and assistance in my work.  I would also like to thank Alexandru Buium for many useful conversations about the dark arts of algebraic geometry.
\end{ack}

\section{Preliminary Notions}
In this section we give some basic definitions, theorems, and ideas in order to fix notation and perspective.  
First of all we define a contact structure:
\begin{defn}
 A \textbf{contact structure} on a manifold $M$ of dimension $2n-1$ is a \textbf{maximally non-integrable} $2n-2$-plane distribution, $\xi \subset T(M).$
In other words $\xi$ is a field of $2n-2$-planes which is the kernel of some $1$-form $\alpha$ which satisfies $$\alpha \wedge d\alpha^{n-1} \neq 0.$$  Such an $\alpha$ is called a \textbf{contact 1-form}.    
\end{defn}
Notice that given such a $1$-form $\alpha$, $f\alpha$ is also a contact $1$-form for $\xi$ whenever $f$ is smooth and non-vanishing.  In the following, a contact manifold has dimension $2n-1$, hence the symplectization has dimension $2n$ and our symplectic bases all have dimension $2n-2.$  Unless otherwise stated we assume $M$ compact without boundary. 
Given a choice of contact $1$-form, $\alpha$ we define the \emph{Reeb vector field} of
 $\alpha$ as the unique vector field $R_{\alpha}$ satisfying $$i_{R_{\alpha}} d \alpha = 0$$ and $$i_{R_{\alpha}} \alpha = 1.$$   
 A contact form $\alpha$ is called \emph{quasi-regular} if, in flow-box coordinates with respect to the flow of the Reeb vector field, orbits intersect each flow box a finite number of times.  If that number can be taken to be $1$ we call the $\alpha$ \emph{regular}.  Note that if the manifold is compact this makes all Reeb orbits periodic\footnote{By Poincar\'{e} recurrence, for example.} of the same period.  In contrast, the standard contact form on $\bb{R}^{2n-1}$ is regular.  We call a contact manifold (quasi)regular if there is a contact form $\alpha$ for $\xi$ such that $\alpha$ is (quasi)regular.  Given $(M, \alpha)$, we denote by $V$ the \emph{symplectization} of $M$: $$V := (M \times \bb{R}, \omega = d(e^t \alpha)).$$  A contact structure defines a symplectic vector bundle with transverse symplectic form $d\alpha.$  We can then choose an almost complex structure $J_{0}$ on $\xi.$  We extend this to a complex structure $J$ on $V$ by $$J|_{\xi}= J_{0}$$ and $$J\frac{\partial}{\partial t} = R_{\alpha}$$ where $t$ is the variable in the $\bb{R}$-direction.  Note that we also get a metric, as usual,  on $V$ compatible with $J$ given by $$g(v, w) = \omega(v, Jw).$$

We have the important result from ~\cite{BG00b}, originally proved in the regular case in ~\cite{BW58}:
\begin{thm}[Orbifold Boothby-Wang]
 Let $(M, \xi)$ be a quasiregular contact manifold.  Then $M$ is a principal  $S^1$-orbibundle over a symplectic orbifold $(\mathcal{Z}, \omega)$ with connection $1$-form $\alpha$ whose curvature satisfies $$d\alpha = \pi^*\omega.$$  If $\alpha$ is \textbf{regular} then $\mathcal{Z}$ is a manifold, and $M$ is the total space of a principal $S^1$-bundle. 
\end{thm}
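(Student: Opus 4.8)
The plan is to reconstruct the $S^1$-action directly from the Reeb flow and then descend the contact data to the quotient. First I would note that the Reeb field is strict: since $\mathcal{L}_{R_\alpha}\alpha = i_{R_\alpha}d\alpha + d\,i_{R_\alpha}\alpha = 0$, the flow $\phi_t$ of $R_\alpha$ preserves $\alpha$, and hence also $d\alpha$ and $\xi = \ker\alpha$. Quasiregularity together with compactness of $M$ forces every Reeb orbit to be closed, with period function bounded above (each orbit meets a flow box only finitely often, and the generic orbits share a common period). After rescaling the flow by a constant to normalize this generic period, I would produce a smooth $S^1$-action whose infinitesimal generator is a constant multiple of $R_\alpha$.

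Next I would identify the isotropy structure of this action. Because $i_{R_\alpha}\alpha = 1$, the Reeb field is nowhere vanishing, so the action has no fixed points; the bound on the number of times an orbit meets a flow box translates precisely into the statement that every isotropy group $\Gamma_p$ is a finite cyclic subgroup of $S^1$, so the action is locally free. Applying the slice theorem for compact group actions, each orbit has an equivariant tubular neighborhood modeled on $S^1 \times_{\Gamma_p} D^{2n-2}$ with $\Gamma_p$ finite, and these local models are exactly orbifold charts for the quotient $\mathcal{Z} = M/S^1$. Assembling the slices into a genuine orbifold atlas and verifying that $\pi\colon M \to \mathcal{Z}$ is a principal $S^1$-orbibundle (a Seifert fibration) is where the real work lies, and I expect this to be the main obstacle, since one must track the finite stabilizers and check the transition data in the orbifold category rather than the smooth one.

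With the orbibundle in hand, descending the symplectic form is formal. Since $i_{R_\alpha}d\alpha = 0$ and $\mathcal{L}_{R_\alpha}d\alpha = 0$, the two-form $d\alpha$ is basic, so it descends to a form $\omega$ on $\mathcal{Z}$ with $\pi^*\omega = d\alpha$. Closedness follows from $\pi^* d\omega = d\,d\alpha = 0$ and injectivity of $\pi^*$ on basic forms. For nondegeneracy I would use the contact condition $\alpha \wedge d\alpha^{n-1} \neq 0$, which makes $d\alpha$ nondegenerate on $\xi$; since $\xi$ is a horizontal complement to the fiber that $d\pi$ carries isomorphically onto $T\mathcal{Z}$, the descended $\omega$ is symplectic in the orbifold sense. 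Finally, $\alpha(R_\alpha) = 1$ together with $S^1$-invariance exhibits $\alpha$ as a principal connection one-form with curvature $\pi^*\omega$, giving $d\alpha = \pi^*\omega$. In the regular case all $\Gamma_p$ are trivial, the action is free, $\mathcal{Z}$ is a smooth manifold, and $M \to \mathcal{Z}$ is an ordinary principal $S^1$-bundle, recovering the classical Boothby--Wang statement.
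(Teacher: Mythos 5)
The paper itself offers no proof of this statement: it is quoted from Boyer--Galicki (\cite{BG00b}), with the regular case going back to Boothby--Wang (\cite{BW58}). Your outline follows the same route as the standard literature proof, and most of it is sound: $\mathcal{L}_{R_\alpha}\alpha = 0$ by Cartan's formula, so the flow preserves $\alpha$, $d\alpha$, and $\xi$; once a locally free $S^1$-action generated by a constant multiple of $R_\alpha$ is in hand, the slice theorem gives the orbifold charts $S^1\times_{\Gamma_p}D^{2n-2}$ on the quotient; and the descent of $d\alpha$ to a symplectic orbifold form $\omega$ with $\pi^*\omega = d\alpha$ is exactly the formal argument you give (basic because $i_{R_\alpha}d\alpha = 0$ and $\mathcal{L}_{R_\alpha}d\alpha = 0$, nondegenerate because $d\alpha|_{\xi}$ is and $d\pi$ identifies $\xi$ with $T\mathcal{Z}$ in the uniformizing charts).

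The one step you assert rather than prove is the crux: passing from ``every Reeb orbit is a circle'' to ``the flow is, after a constant reparametrization, an $S^1$-action.'' This is not formal. A foliation of a compact manifold by circles can have an unbounded period function in dimension at least $5$ (Sullivan's example), in which case no such reparametrization exists; and even granting bounded periods, one must argue that the minimal period is constant on a dense set and that the exceptional periods divide it. The fix uses structure you already have: the Reeb flow preserves the associated metric $g = d\alpha(\cdot, J\cdot) + \alpha\otimes\alpha$, so the closure of $\{\phi_t\}$ in the compact group $\mathrm{Isom}(M,g)$ is a torus; since each orbit closure is already a circle and an isometry fixing the dense principal stratum pointwise is the identity, that torus is one-dimensional, and its action has $R_\alpha$ as a constant multiple of the generator (alternatively, invoke Wadsley's theorem on geodesible circle foliations). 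With that supplied --- and after rescaling $\alpha$ by the same constant so that it evaluates to $1$ on the new generator and is an honest connection form --- your argument goes through and agrees with the cited proof.
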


This enables us to study the nature of $M$ via the cohomology of $\mathcal{Z}.$  As we will see, in nice enough cases, the cohomology of $\mathcal{Z}$ along with the bundle data of the Boothby-Wang fibration will determine the contact homology as well.  Notice that if we really want to study the quasi-regular case via the base we are forced to consider symplectic orbifolds, a complete study of contact geometry with symmetries necessarily must include orbifolds.

The first important examples here are the homogeneous contact manifolds.  These are contact manifolds which have a transitive action of a Lie group via contactomorphisms.  In all of these examples we can apply the theorem of Bourgeois \emph{almost} directly\footnote{We say almost, since these manifolds are not always monotone. ie., that $c_{1}$ is a multiple of the symplectic form.}.  Next we have the toric contact manifolds.  The toric situation is not quite as nice as with the homogeneous manifolds, but we will still be able to make a similar statement and read off contact homology from the Boothby-Wang data.
Before we describe the contact situation in detail, we describe the corresponding symplectic objects over which we will construct our $S^1$ bundles.  We will have to work with symplectic orbifolds as in some of our examples the action of the Reeb vector field is only locally free.
\subsection{Toric Symplectic and Toric Contact Manifolds}

\begin{defn}
 Let $M$ be a symplectic orbifold.  Suppose that the Lie group $G$ acts on $M$ by (orbifold) symplectomorphisms.  The action is called \textbf{Hamiltonian} is for each $\tau \in \mathfrak{g}$ the associated vector field $X_{\tau}$ on $M$ satisfies $$i_{X_{\tau}}\omega = dH$$ for some $H \in C^{\infty}(M , \bb{R}).$
\end{defn} 
We will actually be interested in something stronger.  We will assume that our actions are what some authors call \emph{strongly} Hamiltonian, ie., that there exists a \emph{moment map} $$\mu : M \rightarrow \mathfrak{g}^*$$ such that 
 $$d\langle \mu(p),\tau \rangle=-i_{X_{\tau}} \omega.$$

This result ~\cite{LerTol} will be extremely important for us when we determine how to compute contact homology.
\begin{thm}(Lerman-Tolman)
 Let $(M, \omega)$ be a symplectic orbifold.  Let $G$ be a compact Lie group acting effectively, and such that the action is strongly Hamiltonian with moment map $\mu$.  Then each component of the moment map is a Morse-Bott function whose critical submanifolds are all even dimensional with even Morse-Bott index.  
\end{thm}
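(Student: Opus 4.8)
The plan is to reduce the statement to the well-understood case of a circle (or torus) action and then to import that analysis into the uniformizing charts of the orbifold. Fix $\tau \in \mathfrak{g}$ and consider the component $\mu^\tau := \langle \mu, \tau\rangle$. Let $T_\tau$ denote the closure of the one-parameter subgroup $\{\exp(s\tau)\}$; this is a torus with Lie algebra $\mathfrak{t}_\tau$ containing $\tau$, and $\tau$ generates $T_\tau$ densely. Since $d\mu^\tau = -i_{X_\tau}\omega$ and $\omega$ is nondegenerate, the critical set of $\mu^\tau$ is exactly the zero set of $X_\tau$, which coincides with the fixed-point set $\mathrm{Fix}(T_\tau)$ (a point is fixed by the flow of $X_\tau$ precisely when it is fixed by the closure $T_\tau$). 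This reduction is what will force all the weights appearing below to be nonzero.

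Next I would localize. Around a critical point $x$ I choose a $T_\tau$-invariant uniformizing chart $\pi\colon \tilde U \to U = \tilde U/\Gamma_x$ with $\Gamma_x$ finite. Because $T_\tau$ is connected, its action lifts to $\tilde U$, preserving the lifted symplectic form $\tilde\omega = \pi^*\omega$, and the lift together with $\Gamma_x$ furnishes a symplectic action of a compact group on the honest manifold $\tilde U$. The function $\tilde H := \mu^\tau \circ \pi$ is then a smooth $\Gamma_x$-invariant Hamiltonian generating the lifted $X_\tau$. All of the assertions of the theorem are local and $\Gamma_x$-invariant, so it suffices to prove them for $\tilde H$ on $\tilde U$ and then descend through $\pi$.

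On $\tilde U$ I would apply the standard equivariant normal form. Averaging a Riemannian metric produces a $T_\tau$-invariant one, and the exponential map linearizes the action near the lifted fixed point $\tilde x$; an equivariant Darboux theorem then provides symplectic coordinates in which $T_\tau$ acts linearly and symplectically. Decomposing $T_{\tilde x}\tilde U$ into the fixed subspace together with the nonzero symplectic weight planes (with weights $\lambda_i \in \mathfrak{t}_\tau^* \setminus \{0\}$), the fixed subspace is symplectic, hence even-dimensional, and equals the tangent space to the lift of $\mathrm{Fix}(T_\tau)$; thus the critical set is an even-dimensional suborbifold. On the $i$-th weight plane one computes $\tilde H = \tfrac12\langle \lambda_i,\tau\rangle(x_i^2 + y_i^2) + \text{const}$, where $\langle \lambda_i, \tau\rangle \neq 0$ precisely because $\tau$ generates $T_\tau$. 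Hence the Hessian of $\tilde H$ transverse to the critical set is nondegenerate, so $\tilde H$ is Morse-Bott, and its index is $2\,\#\{i : \langle \lambda_i,\tau\rangle < 0\}$, which is even. Since these statements are $\Gamma_x$-invariant, they descend to $\mu^\tau$ on $U$.

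The main obstacle I anticipate is not the linear algebra of weights, which is routine, but the bookkeeping required to run the equivariant linearization and equivariant Darboux theorem $\Gamma_x$-equivariantly inside the uniformizing charts: one must check that the lift of the connected torus $T_\tau$ is compatible with the finite local group, that the invariant metric and the Darboux coordinates can be chosen $\Gamma_x$-invariantly, and that the resulting fixed set and transverse Hessian descend to well-defined orbifold data. Once this equivariant normal form is secured in the charts, the parity of the dimension and the index follow immediately from the symplectic weight decomposition.
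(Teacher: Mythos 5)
The paper offers no proof of this statement: it is quoted as a theorem of Lerman and Tolman and used as a black box, so there is nothing internal to compare your argument against. What you wrote is essentially the standard Lerman--Tolman argument: pass to the torus $T_\tau$ obtained as the closure of $\exp(s\tau)$ (which also cleanly handles the nonabelian case stated here), identify the critical set of $\langle\mu,\tau\rangle$ with the zero set of $X_\tau$, and read off the parity of the dimension and of the index from the symplectic weight decomposition in an equivariant normal form inside a uniformizing chart; the argument is correct. The one step you flag but do not fully discharge --- lifting the $T_\tau$-action to the chart compatibly with $\Gamma_x$ --- is indeed the only delicate point, but it is harmless: what the proof actually uses is the $\Gamma_x$-invariant lift of the vector field $X_\tau$ (which exists by the very definition of a vector field on an orbifold, and vanishes at $\tilde x$ since $d\tilde H$ does), together with the closure of its linearized flow at $\tilde x$ inside the unitary group of an averaged invariant metric; that closure is a torus whether or not it is literally $T_\tau$ rather than a covering or an extension by $\Gamma_x$, and the weight computation, the evenness of the fixed symplectic subspace, and the index $2\,\#\{i:\langle\lambda_i,\tau\rangle<0\}$ go through verbatim. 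With that reading your proposal is a complete and correct proof of the cited result.
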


When the group $G$ is a torus of maximal possible dimension we have a name for these manifolds:
\begin{defn} Suppose that a symplectic orbifold $(M, \omega)$ of dimension $2n$ admits an effective Hamiltonian action of a torus $T^n$ of dimension $n.$ Then we call $(M, \omega)$ a \textbf{toric} symplectic orbifold.
 \end{defn}

Now we consider a compact Lie group $G$ acting via contactomorphisms on the contact manifold $(M^{2n-1}, \xi).$   By averaging over $G$ we can always obtain a $G$-invariant contact form for $\xi.$  We find it convenient to limit all of our actions.  First given $\zeta \in \mathfrak{g}$, let $X_{\zeta}$ denote the fundamental vector field associated to $\zeta$ defined via the exponential map.  The following definition was introduced in ~\cite{BG00}.  

\begin{defn}  Let $G$ be a Lie group which acts on the contact manifold $(M, \xi).$  The action is said to be of \textbf{Reeb type} if there is a contact $1$-form $\alpha$ for $\xi$ and an element $\zeta \in \mathfrak{g}$, such that $X^{\zeta}=R_{\alpha}.$  
\end{defn}

 This is important since as long as such an action is proper, we know that our contact manifold is an $S^1$-bundle.  The proof of the following can be found in ~\cite{BG} 

\begin{prop}
 If an action of a torus $T^k$ is of Reeb type then there is a quasi-regular contact structure whose $1$-form satisfies, $ker(\alpha)=\xi.$ Moreover, then $M$ is the total space of a $S^1$ bundle over a symplectic orbifold which admits a Hamiltonian action of a torus $T^{k-1}.$  
\end{prop}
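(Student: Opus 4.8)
The plan is to exploit the freedom in rescaling the contact form within the class determined by $\xi$: the given element $\zeta$ need not generate a closed subgroup, so its Reeb flow need not be periodic, but I will perturb it to a rational element of the torus Lie algebra whose flow \emph{is} periodic and then invoke the Orbifold Boothby-Wang theorem. First I would average the given contact $1$-form over $T^k$ to obtain a $T^k$-invariant form $\alpha$ for $\xi$, so that $\mathcal{L}_{X^{\tau}}\alpha = 0$ for every $\tau \in \mathfrak{g}$; by hypothesis $R_{\alpha}=X^{\zeta}$ for some $\zeta \in \mathfrak{g}$. I then consider the set $\mathcal{C}=\{\tau\in\mathfrak{g}: \alpha(X^{\tau})>0 \text{ everywhere on } M\}$. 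Since $\alpha_p(X^{\tau}_p)$ is linear in $\tau$ and $M$ is compact, $\mathcal{C}$ is an open convex cone, and it is nonempty because $\alpha(X^{\zeta})=\alpha(R_{\alpha})\equiv 1$ shows $\zeta\in\mathcal{C}$.

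Because $\mathcal{C}$ is open, the rational elements of $\mathfrak{g}$ (relative to the integer lattice of $T^k$) are dense in it, so I can choose a primitive rational $\zeta'\in\mathcal{C}$; such a $\zeta'$ generates a circle subgroup $S^1_{\zeta'}\subset T^k$. Setting $h=\alpha(X^{\zeta'})>0$ and $\alpha'=h^{-1}\alpha$, a direct computation using $\mathcal{L}_{X^{\zeta'}}\alpha=0$ gives $\alpha'(X^{\zeta'})=1$ and
\[ i_{X^{\zeta'}}\,d\alpha' = -h^{-2}\,(X^{\zeta'}h)\,\alpha = 0, \]
since $X^{\zeta'}h=(\mathcal{L}_{X^{\zeta'}}\alpha)(X^{\zeta'})=0$. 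Hence $R_{\alpha'}=X^{\zeta'}$ while $\ker\alpha'=\ker\alpha=\xi$. As $R_{\alpha'}$ generates the circle action $S^1_{\zeta'}$ and is nowhere zero, the action is locally free, every Reeb orbit is periodic, and $\alpha'$ is quasi-regular. This settles the first assertion, and the Orbifold Boothby-Wang theorem then presents $M$ as a principal $S^1$-orbibundle $\pi:M\to\mathcal{Z}$ over a symplectic orbifold $(\mathcal{Z},\omega)$ with $d\alpha'=\pi^{*}\omega$.

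For the last assertion I would study the residual torus $T^{k-1}=T^k/S^1_{\zeta'}$, which acts on $\mathcal{Z}$ because the abelian $T^k$-action commutes with $S^1_{\zeta'}$ and hence descends to the quotient. To see that this action is strongly Hamiltonian, for $\tau$ lifted to $\mathfrak{g}$ I set $H_{\tau}=\alpha'(X^{\tau})$, which is $T^k$-invariant and satisfies $R_{\alpha'}H_{\tau}=0$, so it descends to $\bar H_{\tau}$ on $\mathcal{Z}$. Denoting by $\bar X^{\tau}$ the induced vector field, invariance gives $\pi^{*}(i_{\bar X^{\tau}}\omega)=i_{X^{\tau}}d\alpha'=-dH_{\tau}$, whence $i_{\bar X^{\tau}}\omega=-d\bar H_{\tau}$; thus $\bar\mu$, with $\langle\bar\mu,\tau\rangle=-\bar H_{\tau}$, is the desired moment map for $T^{k-1}$.

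I expect the main obstacle to be the perturbation step itself: one must guarantee that $\mathcal{C}$ is genuinely open—this is precisely where compactness of $M$ is essential—so that a rational $\zeta'$ can be found without leaving the positivity cone, and that $\zeta'$ can be taken primitive so that $S^1_{\zeta'}$ is an embedded subtorus and the quotient is exactly $T^{k-1}$ rather than a finite quotient thereof. The remaining ingredients, namely the Reeb recomputation for $\alpha'$ and the descent of the moment map, are routine once $\alpha$ has been arranged to be $T^k$-invariant.
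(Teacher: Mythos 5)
Your argument is correct and is essentially the standard one: the paper itself offers no proof here (it defers to the cited reference \cite{BG}), and the proof there proceeds exactly as you do --- average $\alpha$ over $T^k$, observe that $\{\tau : \alpha(X^{\tau})>0\}$ is an open convex cone containing $\zeta$ (openness using compactness of $M$), pick a primitive rational $\zeta'$ in it, rescale by $\alpha(X^{\zeta'})^{-1}$ to make $X^{\zeta'}$ the Reeb field of a quasi-regular form, and then apply the orbifold Boothby--Wang theorem together with the descent of $\alpha'(X^{\tau})$ to a moment map for the residual $T^{k-1}$. Your cancellation in the computation of $i_{X^{\zeta'}}d\alpha'$ and the invariance of $h$ under $T^k$ (needed so that $\alpha'$ is still invariant) both check out, so I see no gap.
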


We now need the definition:
\begin{defn}
 A \textbf{toric contact manifold} is a co-oriented contact manifold $(M^{2n-1}, \xi)$ with an effective action of a torus, $T^{n}$ of maximal dimension $n$ and a moment map\footnote{The contact moment map can be defined in terms of the symplectization, $V$, or intrinsically in terms of the annhilator of $\xi$ in $TM^*$.  For more information about this see ~\cite{Ler2} or ~\cite{BG}}  into the dual of the Lie algebra of the torus.  
\end{defn}

Notice now that by constraining to actions of Reeb type we get that all of our toric contact manifolds are circle bundles over symplectic orbifolds.  Even more is true, the bases are toric.  So we limit our study strictly to those tori of Reeb type.\footnote{Actually, our homology calculations are potentially false otherwise, since in dimension $3$, there are toric manifolds with over-twisted contact structures. These always have vanishing contact homology (when contact homology is well-defined).}  An action being of Reeb type is quite a strong condition, but by the classification theorem of Lerman ~\cite{Ler2} we see that it includes many cases of interest.  We only include his classification for $2n-1 >3.$
\begin{thm}
 Let $(M, \xi)$ be a toric contact manifold of dimension greater than $3.$  
\begin{enumerate}
 \item[i.] If the torus action is free, then $M$ is a principal torus bundle over a sphere.
\item[ii.] If the action is not free then the \textbf{moment cone} is a \textbf{good rational polyhedral cone}. 
\end{enumerate}

\end{thm}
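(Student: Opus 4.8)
The natural setting is the symplectization. Since $(M,\xi)$ is co-oriented we may choose, by averaging over $T^n$, a $T^n$-invariant contact form $\alpha$, and then $V=(M\times\bb{R},\,\omega=d(e^t\alpha))$ is a symplectic cone: the Liouville field $\partial_t$ generates an $\bb{R}$-action with $\mathcal{L}_{\partial_t}\omega=\omega$. The torus action lifts to $V$ by acting trivially on the $\bb{R}$-factor, commutes with the cone scaling, and is Hamiltonian with moment map $\Phi\colon V\to\mathfrak{t}^*$ determined by $\langle\Phi(m,t),\tau\rangle=e^{t}\,\alpha_m(X_\tau(m))$. This $\Phi$ is homogeneous of degree one along the cone direction, so its image together with the origin is a cone $C:=\Phi(V)\cup\{0\}$, the moment cone. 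The plan is to read off the two alternatives from the local structure of the action along orbits and then globalize. The governing infinitesimal fact is that $X_\tau$ vanishes at $p$ exactly when $\tau$ lies in the isotropy Lie algebra $\mathfrak{t}_p$; since the torus is abelian the components of $\Phi$ Poisson-commute, so every orbit is isotropic in $V^{2n}$, and where the action is free the orbit is Lagrangian of dimension $n$. Because $d\Phi_p(v)(\tau)=\omega(X_\tau(p),v)$ and $\omega$ is nondegenerate, surjectivity of $d\Phi_p$ is equivalent to triviality of $\mathfrak{t}_p$.

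For the free case, the $T^n$-invariant map $\Psi\colon M\to\mathfrak{t}^*$, $\langle\Psi(m),\tau\rangle=\alpha_m(X_\tau(m))$, descends to $\bar\Psi\colon B:=M/T^n\to\mathfrak{t}^*$ on the closed $(n-1)$-manifold $B$. The infinitesimal fact above shows $\Phi$ is then a submersion with Lagrangian orbit-fibers, so each fiber of $\Phi$ is a single orbit; restricting to $\{t=0\}$ makes $\bar\Psi$ injective and, by a dimension count on $TM$ modulo the orbit directions, an immersion. I would then show that $\bar\Psi$ embeds $B$ onto a compact strictly convex hypersurface transverse to the rays of $C$ -- the ``link'' of the cone -- using a convexity theorem for the homogeneous moment map $\Phi$. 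A compact convex hypersurface bounding a convex body in $\bb{R}^n$ is diffeomorphic to $S^{n-1}$, so $B\cong S^{n-1}$ and $M\to B$ is a principal $T^n$-bundle over a sphere, which is alternative (i).

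For the non-free case, choose an orbit $T^n\cdot p$ with nontrivial isotropy $H=(T^n)_p$ and invoke the equivariant symplectic normal form (Marle--Guillemin--Sternberg) on a slice transverse to the orbit in $V$. In these coordinates $\Phi$ is an affine shift of the moment map of the linear isotropy representation of $H$, a direct sum of weight spaces; its image contributes a rational polyhedral corner whose facet normals are the primitive integral generators of the one-dimensional stabilizer subgroups occurring as isotropy along codimension-one strata. Homogeneity makes $C$ a cone with apex $0$, and rationality follows because each isotropy subgroup is cut out by rational characters, so its Lie algebra is defined over the integral lattice $\mathfrak{t}_{\bb{Z}}$. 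To glue the local polyhedral pictures into a single convex polyhedral cone (with no spurious faces) I would again appeal to the cone-version of the convexity and connected-fiber theorem. Finally, the two defining conditions of a \emph{good} cone -- that the facet normals form a minimal primitive generating set, and that the normals of the facets meeting along any face are a $\bb{Z}$-basis of the saturated sublattice $\mathfrak{t}_{\bb{Z}}\cap(\text{their }\bb{R}\text{-span})$ -- translate, via the normal form, into the statements that the relevant isotropy subgroups are connected subtori and that their weights extend to a lattice basis; effectiveness of the $T^n$-action forces exactly these, giving alternative (ii).

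The main obstacle is twofold and lies entirely in the non-free case. First, the convexity and connected-fiber input that assembles the local cones into one convex polyhedral cone is not the classical Atiyah--Guillemin--Sternberg statement, because $V$ is noncompact and $\Phi$ is proper only ``in the cone direction,'' so I would have to establish a homogeneous, local-to-global convexity argument adapted to symplectic cones. Second, verifying the integral-basis condition (goodness, as opposed to mere rationality) is the genuinely delicate step: it is precisely this condition that encodes smoothness of the orbit space, and it can fail for otherwise reasonable rational cones. Establishing it requires carefully tracking the isotropy weights through the slice representation and using effectiveness to exclude non-primitive or non-basis weight configurations.
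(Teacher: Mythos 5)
This theorem is stated in the paper without proof: it is Lerman's classification, quoted from \cite{Ler2}, so the only meaningful comparison is with Lerman's own argument. Your outline does track the architecture of that argument (symplectization as a symplectic cone, degree-one homogeneous moment map $\Phi$, slice/normal-form analysis of isotropy, a local-to-global convexity and connected-fiber principle), but the two steps you defer are not technical polish --- they are the proof. The convexity-and-connectedness input for the cone is precisely where the hypothesis $\dim M > 3$ enters, and your sketch never invokes that hypothesis anywhere; this is a genuine red flag, because both conclusions of the theorem fail in dimension $3$. For the Giroux forms $\alpha_n$ on $T^3$ quoted in this paper's introduction \cite{Gi1}, the normalized moment map $\Psi/\|\Psi\|$ wraps $n$ times around a circle in $\mathfrak{t}^*$, so the fibers of $\Phi$ are disconnected and your step ``each fiber of $\Phi$ is a single orbit'' breaks; in the non-free $3$-dimensional case the moment cone need not be good at all (overtwisted examples). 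In Lerman's proof the local convexity and local fiber-connectedness are extracted from the slice representations and genuinely require $n \geq 3$; any write-up that does not isolate where $n \geq 3$ is used is attempting to prove a false statement.

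Your derivation of goodness is also misattributed. Effectiveness of the action only yields primitivity and minimality of the facet normals; the integral-basis condition is forced not by effectiveness but by smoothness of $M$: near the stratum over a face one compares, via the uniqueness part of the slice theorem, a neighborhood in $M$ with the model contact toric space built from the cone, and if the normals of the facets through that face failed to be a $\bb{Z}$-basis of the saturated sublattice, the model would be a nontrivial (orbifold) quotient, contradicting that $M$ is a manifold. Relatedly, goodness encodes smoothness of the total space $M$, not ``smoothness of the orbit space,'' which is typically singular even for good cones --- indeed this paper's whole point is that the quotient $\mathcal{Z}$ is generally an orbifold. Finally, a small but telling correction in the free case: there the moment cone is all of $\mathfrak{t}^*$ (consider the unit cosphere bundle of $T^n$, where the image of $\Psi$ is the whole unit sphere), so the real content is not that the image bounds a convex body but that $\Psi$ is nowhere zero and $\Psi/\|\Psi\| : M \to S^{n-1}$ is a principal $T^n$-bundle --- which again rests on the connected-fiber statement, i.e., on $n \geq 3$.
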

We are interested mostly in the second case and since we have the following proposition in~\cite{BG} it is completely reasonable for us to restrict ourselves to actions of Reeb type.
\begin{prop}
 Let $M$ be a contact toric manifold of dimension greater than $3$.  Then the action of the torus is of Reeb type if and only if it is not free and the moment cone contains no non-zero linear subspace. 
\end{prop}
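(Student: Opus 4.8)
The plan is to translate the Reeb type condition entirely into the convex geometry of the moment cone $C \subset \mathfrak{g}^*$, where $\mathfrak{g}$ is the Lie algebra of $T^n$. For a fixed $T^n$-invariant contact form $\alpha$ I would use the contact moment map $\Psi_\alpha : M \to \mathfrak{g}^*$ defined by $\langle \Psi_\alpha(p), \eta \rangle = \alpha_p(X^\eta_p)$, so that $C = \bb{R}_{\geq 0}\cdot \Psi_\alpha(M)$ (equivalently the closure of the image of the homogeneous symplectic moment map $\Phi$ on the symplectization $V$, together with the apex $0$). The whole argument rests on one standard fact from convexity: a closed convex cone $C$ contains no nonzero linear subspace, i.e. $C \cap (-C) = \{0\}$, if and only if its dual cone $C^\vee = \{\zeta \in \mathfrak{g} : \langle \eta, \zeta\rangle \geq 0 \text{ for all } \eta \in C\}$ has nonempty interior, and moreover $\zeta \in \mathrm{int}(C^\vee)$ exactly when $\langle \eta, \zeta\rangle > 0$ for every $\eta \in C \setminus \{0\}$.

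For the forward implication I would assume the action is of Reeb type, so there are a $T^n$-invariant $\alpha$ and $\zeta \in \mathfrak{g}$ with $R_\alpha = X^\zeta$. Evaluating the contact moment map on $\zeta$ gives $\langle \Psi_\alpha(p), \zeta\rangle = \alpha_p(R_\alpha) = 1$ for every $p$, hence $\langle \eta, \zeta\rangle > 0$ on all of $C \setminus \{0\}$. Thus $\zeta \in \mathrm{int}(C^\vee)$ and $C$ is pointed, which is the statement that $C$ contains no nonzero linear subspace. To get non-freeness I would invoke the preceding classification theorem, under which a free action forces the moment cone to be a linear subspace; since a nonzero subspace is never pointed, pointedness rules the free case out, and Reeb type indeed yields both conditions.

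For the converse I would assume the action is not free and that $C$ contains no nonzero linear subspace. By the classification theorem $C$ is then a good rational polyhedral cone, in particular a closed pointed convex cone, so $\mathrm{int}(C^\vee) \neq \emptyset$; choose any $\zeta \in \mathrm{int}(C^\vee)$ (taking $\zeta$ rational additionally delivers the quasi-regular structure of the earlier proposition). Starting from an arbitrary $T^n$-invariant contact form $\alpha_0$, the $T^n$-invariant function $\alpha_0(X^\zeta) = \langle \Psi_{\alpha_0}, \zeta\rangle$ is everywhere strictly positive because $\Psi_{\alpha_0}(p) \in C \setminus \{0\}$, so $\alpha := \alpha_0 / \alpha_0(X^\zeta)$ is a $T^n$-invariant contact form for $\xi$ with $\alpha(X^\zeta) \equiv 1$. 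Invariance gives $\mathcal{L}_{X^\zeta}\alpha = 0$, and Cartan's formula together with $i_{X^\zeta}\alpha = 1$ yields $i_{X^\zeta}d\alpha = \mathcal{L}_{X^\zeta}\alpha - d(i_{X^\zeta}\alpha) = 0$. By uniqueness of the Reeb field, $R_\alpha = X^\zeta$, so the action is of Reeb type.

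The main obstacle, and the point I would treat most carefully, is the converse construction: one needs the contact moment map to be nowhere zero, i.e. $\Psi_{\alpha_0}(M) \subset C \setminus \{0\}$, so that the rescaling $\alpha_0/\alpha_0(X^\zeta)$ is smooth and well defined — this non-vanishing is exactly what places the image on a compact slice of the cone away from the apex, and is where effectiveness of the action and the good-cone structure enter. The remaining steps are the convex-duality equivalence ``pointed $\iff$ solid dual cone'' and the routine Cartan calculus; the genuinely nontrivial external inputs are that duality statement and the classification theorem already quoted, which simultaneously supplies the polyhedrality of $C$ in the non-free case and the identification of freeness with $C$ being a linear subspace.
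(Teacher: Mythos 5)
The paper itself does not prove this proposition --- it is quoted from the reference [BG08] --- so there is no internal argument to compare against; I can only assess your proof on its own terms. Your skeleton is the standard and correct one: in the forward direction, $\langle \Psi_\alpha(p),\zeta\rangle=\alpha_p(R_\alpha)=1$ places the punctured cone in an open half-space, and in the converse the normalization $\alpha=\alpha_0/\alpha_0(X^\zeta)$ together with the Cartan-formula check that $i_{X^\zeta}d\alpha=\mathcal{L}_{X^\zeta}\alpha-d(i_{X^\zeta}\alpha)=0$ is exactly how one manufactures a form with $R_\alpha=X^\zeta$.

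There is, however, one genuine gap, and it sits precisely at the step you name as ``the main obstacle'' and then do not carry out: you need $\Psi_{\alpha_0}(p)\neq 0$ for every $p\in M$, i.e.\ that no torus orbit is entirely tangent to $\xi$. This does not follow from effectiveness or from the cone being a good pointed polyhedral cone. The orbit directions lying in $\xi_p$ form an isotropic subspace of $(\xi_p,d\alpha_p)$ (the torus is abelian and $\Psi_{\alpha_0}$ is invariant), which only bounds their dimension by $n-1$, so a low-dimensional orbit could a priori lie inside the contact hyperplane and send $\Psi_{\alpha_0}$ to the apex. The nonvanishing of the contact moment map for non-free actions in dimension greater than $3$ is a substantive lemma in Lerman's \emph{Contact toric manifolds} (the paper's reference [Ler02]) --- it is exactly where the hypothesis $\dim M>3$ enters, and it can fail in dimension $3$ --- so it must be proved or explicitly cited, not asserted as a consequence of ``effectiveness and the good-cone structure.'' A smaller issue: to rule out freeness in the forward direction you claim the classification makes the moment cone of a free action a linear subspace, but the theorem as quoted only says $M$ is a principal torus bundle over a sphere; you need Lerman's accompanying fact that the moment cone is then all of $\mathfrak{g}^*$, or else argue directly (Reeb type gives a Boothby--Wang fibration over a compact toric symplectic orbifold, whose $T^{n-1}$-fixed points lift to circles in $M$ with positive-dimensional isotropy).
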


\subsection{Cohomology Rings of Reduced Spaces}
In this section we follow ~\cite{GlSt}.  The thing that really makes our calculation possible in its simple form is the wonderful structure of the cohomology rings of symplectically reduced spaces, ie, they are all truncated polynomial rings in the Chern classes.  Moreover in the simply connected case, we know that all of $H_{2}$ can be represented by spheres.  Even better, we can always relate all of these homology and cohomology classes to the moment map.       

First let's work out what we get in general. Let $(M, \omega)$ be a symplectic manifold of dimension $2n.$ Let $G$ be a compact connected Lie group of dimension d which acts via (strongly) Hamiltonian symplectomorphisms, $\mathfrak{g} = Lie(G).$  Let $$\mu: M \rightarrow \mathfrak{g^{*}}$$ denote the corresponding moment map.  Let $\tau$ be a regular value of $\mu$, let $$X_{\tau}=\mu^{-1}(\tau).$$  Then $X_{\tau}/G$ is a symplectic orbifold of dimension $2(n-d)$.  Set $\mathcal{Z} = X_{\tau}/G$.  Let $c_{1}, \ldots, c_{n}$ be the Chern classes of the fibration $M \rightarrow \mathcal{Z}.$

\begin{thm}
 If the $c_{1}, \ldots, c_{n}$ generate $H^{*}(\mathcal{Z} ; \bb{C})$ then
$$H^{*}(\mathcal{Z}; \bb{C}) \simeq \bb{C}[x_{1}, \ldots, x_{n}]/ann(v_{top})$$
where $ann(v_{top})$ is the annhilator of the highest order homogeneous part of the symplectic volume of $\mathcal{Z}.$
\end{thm}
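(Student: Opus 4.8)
The statement is a Kirwan-type surjectivity/description result for the cohomology of a symplectic quotient, so I would reduce it to the well-known structure theory for the cohomology of Hamiltonian reductions. The main strategy is to exploit that $\mathcal{Z} = X_\tau/G$ carries a residual cohomology that is computed by the Cartan model of equivariant cohomology, together with the fact that the Chern classes $c_1,\dots,c_n$ of the fibration $M \to \mathcal{Z}$ furnish an explicit generating set. So the first step is to set up the surjection $\bb{C}[x_1,\dots,x_n] \to H^*(\mathcal{Z};\bb{C})$ sending $x_j \mapsto c_j$; by hypothesis this is surjective, and the whole content is to identify its kernel with $\mathrm{ann}(v_{top})$.

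\emph{Key steps, in order.} First I would recall the Poincar\'e duality pairing on $H^*(\mathcal{Z};\bb{C})$: since $\mathcal{Z}$ is a compact symplectic orbifold of dimension $2(n-d)$, integration against the fundamental class gives a nondegenerate pairing, and I would express this pairing as evaluation of a polynomial in the $c_j$ against the symplectic volume. Concretely, for a polynomial $P(x_1,\dots,x_n)$, the class $P(c_1,\dots,c_n) \in H^*(\mathcal{Z};\bb{C})$ pairs to zero with everything in $H^*(\mathcal{Z};\bb{C})$ if and only if $\int_{\mathcal{Z}} P(c_1,\dots,c_n)\wedge \omega^{k} = 0$ for all $k$, which by the Duistermaat--Heckman / stationary-phase description of the volume is exactly the condition that $P$ annihilate the top-degree homogeneous part $v_{top}$ of the symplectic volume viewed as a function on $\mathfrak{g}^*$ (equivalently, as a functional via differential operators $P(\partial)$). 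The second step is to invoke nondegeneracy of Poincar\'e duality to conclude that $P(c_1,\dots,c_n)=0$ in $H^*(\mathcal{Z};\bb{C})$ \emph{iff} $P \in \mathrm{ann}(v_{top})$; this immediately identifies the kernel of the surjection and yields the claimed isomorphism. The third step is bookkeeping: checking that the volume polynomial $v_{top}$ is genuinely the homogeneous piece that governs the pairing, which is where the Jeffrey--Kirwan / Duistermaat--Heckman residue formula enters, and confirming everything descends correctly through the orbifold singularities (the rational coefficients are harmless over $\bb{C}$).

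\emph{Main obstacle.} The hard part will be step one: making precise the passage from ``$P(c_1,\dots,c_n)$ integrates against all powers of $\omega$ to zero'' to ``$P$ lies in the annihilator of $v_{top}$ as a differential-operator action on the volume polynomial.'' This requires carefully matching the intersection pairing on $\mathcal{Z}$ with the Duistermaat--Heckman polynomial, so that multiplication by $c_j$ in cohomology corresponds to differentiation (or multiplication) on the volume side; the bridge is that the $c_j$ arise as curvatures of the line bundles associated to the reduction, and their integrals against $\omega^k$ are precisely the derivatives of the Duistermaat--Heckman measure at $\tau$. Once this dictionary is pinned down, nondegeneracy of Poincar\'e duality on the orbifold does the rest essentially formally, so I expect the analytic and identification content to be entirely concentrated in establishing that correspondence, following the treatment in~\cite{GlSt}.
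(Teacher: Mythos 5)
The paper itself does not prove this theorem: it is quoted from Guillemin--Sternberg~\cite{GlSt} (the section opens with ``we follow~\cite{GlSt}'' and the theorem is stated without proof), so there is no in-paper argument to compare against. Your proposal is, in substance, the standard Guillemin--Sternberg proof: surjectivity of $x_j\mapsto c_j$ is the hypothesis, Poincar\'e duality over $\bb{C}$ (valid for compact symplectic orbifolds) identifies the kernel with the classes pairing trivially against everything, and the Duistermaat--Heckman description of the volume --- $v_{top}(t)=\frac{1}{N!}\int_{\mathcal{Z}}\langle c,t\rangle^{N}$ with $N=n-d$, so that $P(\partial_t)v_{top}(t)=\frac{1}{(N-k)!}\int_{\mathcal{Z}}P(c)\langle c,t\rangle^{N-k}$ --- converts ``pairs to zero with all monomials in the $c_j$'' into ``$P(\partial)v_{top}=0$.'' Since the $c_j$ generate, those monomials span each graded piece, and nondegeneracy of the duality pairing closes the loop. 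So the architecture is right and the proof goes through.

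One step as you state it would fail if taken literally: you reduce ``$P(c)$ pairs to zero with everything'' to ``$\int_{\mathcal{Z}}P(c)\wedge\omega^{k}=0$ for all $k$'' for a \emph{single} fixed symplectic form. Powers of one $\omega$ do not span $H^{*}(\mathcal{Z};\bb{C})$ in general (already for $\bb{CP}^1\times\bb{CP}^1$ the class $[\omega]$ spans only a line in the two-dimensional $H^2$), so this condition is strictly weaker than membership in the kernel. The correct statement, which is what the differential-operator formulation of $\mathrm{ann}(v_{top})$ actually encodes, is that $\int_{\mathcal{Z}}P(c)\,\langle c,t\rangle^{N-k}=0$ for \emph{all} $t\in\mathfrak{g}^*$ (equivalently, against all monomials $c^{\beta}$ of complementary degree, by polarization); it is precisely because $v_{top}$ is a polynomial function on all of $\mathfrak{g}^*$, not a single number, that its annihilator sees the full intersection pairing. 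Your parenthetical about viewing $v_{top}$ as a functional via $P(\partial)$ shows you have the right object in mind, but the displayed reduction to powers of one $\omega$ should be replaced by the family $\omega_t=\langle c,t\rangle$ before the Poincar\'e-duality step is invoked.
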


Again the following results are in ~\cite{GlSt}
\begin{prop}
 Let $\mathcal{Z}$ be a toric orbifold.  Then the Chern classes as above generate $H^{*}(\mathcal{Z};\bb{C}).$ 
\end{prop}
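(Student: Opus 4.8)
The plan is to realize the toric orbifold $\mathcal{Z}$ as a K\"ahler quotient of a vector space and then read off the generators of its cohomology from the equivariant cohomology of that model via Kirwan surjectivity. First I would invoke the Lerman--Tolman classification of compact symplectic toric orbifolds by labeled simple rational polytopes: from this combinatorial datum one builds $\mathcal{Z}^{2m}$ as a symplectic reduction $\bb{C}^N /\!/_{\tau} K$, where $N$ is the number of facets, $K \subset T^N$ is the subtorus determined by the (weighted) inward normals, and the residual torus $T^N/K \cong T^m$ is the one acting on $\mathcal{Z}$. The orbifold singularities occur exactly where $K$ acts with finite stabilizer, which is why the hypothesis of the theorem works with $\bb{C}$ coefficients: passing to a field of characteristic zero kills the finite isotropy and lets one treat the reduction cohomologically as if the action were free.

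Next I would compute the equivariant cohomology of the model. Since $\bb{C}^N$ is $T^N$-equivariantly contractible, $H^*_{T^N}(\bb{C}^N;\bb{C}) \cong H^*(BT^N;\bb{C}) \cong \bb{C}[u_1,\dots,u_N]$ is a polynomial ring on the $N$ degree-two classes $u_j$, each being the equivariant first Chern class of the $j$-th coordinate line bundle. Restricting the action to $K$, the essential input is Kirwan surjectivity: the composite
$$\kappa : H^*_{K}(\bb{C}^N;\bb{C}) \longrightarrow H^*_{K}(\mu_K^{-1}(\tau);\bb{C}) \cong H^*(\mathcal{Z};\bb{C})$$
is onto, the final isomorphism using that $K$ acts locally freely on the level set so that the Borel quotient carries the rational cohomology of the orbifold $\mathcal{Z}$. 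Because $H^*_{K}(\bb{C}^N;\bb{C})$ is again generated in degree two by the images of the $u_j$, its image under $\kappa$ --- which is all of $H^*(\mathcal{Z};\bb{C})$ --- is generated by the classes $\kappa(u_j)$, all of degree two.

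Finally I would identify these generators with the Chern classes in the statement: $\kappa(u_j)$ is the first Chern class of the orbifold line bundle over $\mathcal{Z}$ associated to the $j$-th coordinate character, equivalently the Poincar\'e dual of the $j$-th invariant divisor, and these are precisely the $c_j$ attached to the reduction $\mathcal{Z} = X_\tau/G$. An alternative, more self-contained route avoids Kirwan's machinery entirely: by the Lerman--Tolman Morse--Bott theorem already quoted, a generic component of the moment map for the residual $T^m$-action is a perfect Morse--Bott function whose critical submanifolds are lower-dimensional toric suborbifolds, so induction on dimension together with the lacunary (even-index) vanishing shows that $H^*(\mathcal{Z};\bb{C})$ is assembled from degree-two equivariant classes. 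The main obstacle in either approach is the orbifold bookkeeping rather than the generation statement itself: one must justify the isomorphism $H^*_{K}(\mu_K^{-1}(\tau);\bb{C}) \cong H^*(\mathcal{Z};\bb{C})$ when $K$ acts only locally freely, and verify that the combinatorially defined divisor classes coincide with the Chern classes $c_j$ used earlier. Both points rest on the characteristic-zero coefficients assumed in the statement.
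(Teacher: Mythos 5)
The paper offers no proof of this proposition at all---it simply cites Guillemin--Sternberg \cite{GlSt}---and your argument (Lerman--Tolman realization of $\mathcal{Z}$ as $\bb{C}^N/\!/_\tau K$, Kirwan surjectivity from $H^*_K(\bb{C}^N;\bb{C})\cong H^*(BK;\bb{C})$, and identification of the degree-two equivariant generators with the Chern classes of $X_\tau\to\mathcal{Z}$) is precisely the standard route taken in that reference. Your proposal is correct and essentially reproduces the cited proof, with the orbifold caveats (local freeness and $\bb{C}$-coefficients) properly flagged.
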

To apply this to all homogeneous contact manifolds we need not only the case of flag manifolds but also of \emph{generalized} flag manifolds.  These are quotients of a complex semi-simple Lie group $G$ by a \emph{parabolic} subgroup $P.$  These include the flag manifolds.  We extend the result from ~\cite{GlSt} about flag manifolds to $G/P.$  For more about generalized flag manifolds see ~\cite{BE} and ~\cite{BGG}
\begin{prop}
 Let $G/P$ be a generalized flag manifold.  Then the cohomology is generated by the Chern classes as above. 
\end{prop}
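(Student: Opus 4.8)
The plan is to reduce the statement to Borel's description of the cohomology of a full flag manifold together with the standard transfer to a parabolic quotient, and then to identify the resulting polynomial generators with Chern classes of the tautological homogeneous bundles. First I would recall that $G/P$ carries a Bruhat (Schubert) cell decomposition, with cells indexed by the minimal-length coset representatives $W^{P}$ of $W/W_{P}$, where $W$ is the Weyl group of $G$ and $W_{P}$ the Weyl group of the Levi factor $L$ of $P$. Each cell is a complex affine space, so every cell has even real dimension; consequently $H^{*}(G/P;\bb{C})$ is concentrated in even degrees and free, and the Leray--Serre spectral sequence of the fibration $P/B \to G/B \to G/P$ degenerates for dimension reasons. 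This yields an injection $\pi^{*} : H^{*}(G/P;\bb{C}) \hookrightarrow H^{*}(G/B;\bb{C})$ whose image is exactly the subalgebra of $W_{P}$-invariants.

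Next I would invoke Borel's theorem in the form used in~\cite{GlSt}: writing $S(\mathfrak{t}^{*})$ for the symmetric algebra on the complexified dual Cartan, placed in cohomological degree two via $\lambda \mapsto c_{1}(G\times_{B}\bb{C}_{-\lambda})$, one has $H^{*}(G/B;\bb{C}) \cong S(\mathfrak{t}^{*})/I_{W}$, where $I_{W}$ is the ideal generated by the $W$-invariant polynomials of positive degree. Combining this with the previous paragraph gives $H^{*}(G/P;\bb{C}) \cong S(\mathfrak{t}^{*})^{W_{P}}/\bigl(I_{W}\cap S(\mathfrak{t}^{*})^{W_{P}}\bigr)$, which is the presentation of $H^{*}(G/P)$ recorded by Bernstein--Gelfand--Gelfand~\cite{BGG}. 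By Chevalley's theorem $S(\mathfrak{t}^{*})^{W_{P}}=S(\mathfrak{t}^{*})^{W_{L}}$ is itself a polynomial ring on finitely many homogeneous basic invariants, so it remains only to realize these basic invariants as Chern classes of the requisite homogeneous bundles.

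The final and most delicate step is this identification. For each fundamental representation $V_{i}$ of the Levi factor $L$, form the associated homogeneous vector bundle $E_{i}=G\times_{P}V_{i}$ over $G/P$. By the splitting principle the Chern classes of $E_{i}$ are the elementary symmetric functions in its Chern roots, and after pullback to $G/B$ those roots are precisely the $T$-weights of $V_{i}$; since $V_{i}$ is an $L$-representation, the resulting symmetric functions are $W_{L}=W_{P}$-invariant polynomials in $\mathfrak{t}^{*}$, and as $V_{i}$ ranges over the fundamentals they generate $S(\mathfrak{t}^{*})^{W_{P}}$. Hence the Chern classes $c(E_{i})$ generate $H^{*}(G/P;\bb{C})$, the degree-two generators being the first Chern classes appearing in the curvature formula of the associated circle bundle and the higher ones supplying the additional polynomial variables (as is already necessary for Grassmannians such as $\mathrm{Gr}(2,4)$, whose cohomology is not generated in degree two).

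I expect the main obstacle to lie in making this last identification compatible with the \emph{specific} Chern classes ``as above'' from the Hamiltonian picture of~\cite{GlSt}, rather than with abstract Borel generators. Concretely one must check that the Chern--Weil (Kirwan) homomorphism $H^{*}(BL) \to H^{*}_{L}(\mathrm{pt}) \to H^{*}(G/P;\bb{C})$ sends the universal characteristic classes to the $c(E_{i})$ above; this reduces to the commutativity of the Chevalley restriction $H^{*}(BL)\cong S(\mathfrak{t}^{*})^{W_{L}} \hookrightarrow S(\mathfrak{t}^{*})$ with the Borel map, together with the equality $W_{L}=W_{P}$. Once this compatibility is in place, the required surjectivity onto $H^{*}(G/P;\bb{C})$ follows from the generation statement above, extending the flag-manifold computation of~\cite{GlSt} to all $G/P$.
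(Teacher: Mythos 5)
Your argument follows the same skeleton as the paper's proof: pass from $G/P$ to $G/B$ via the Bruhat/Schubert decomposition and the resulting injection $H^{*}(G/P;\bb{C})\hookrightarrow H^{*}(G/B;\bb{C})$, then invoke Borel's computation of $H^{*}(G/B;\bb{C})$. But you supply a step the paper's version omits, and that omission is a genuine logical gap there: knowing that the Chern classes generate the \emph{larger} ring $H^{*}(G/B;\bb{C})$ does not by itself show that a subring is generated by classes pulled back from $G/P$ --- generation does not descend along an injection. You close this by identifying the image of $\pi^{*}$ with the $W_{P}$-invariants, obtaining the presentation $S(\mathfrak{t}^{*})^{W_{P}}/\bigl(I_{W}\cap S(\mathfrak{t}^{*})^{W_{P}}\bigr)$, invoking Chevalley's theorem to see that $S(\mathfrak{t}^{*})^{W_{P}}$ is again polynomial, and realizing its basic invariants as Chern classes of the homogeneous bundles $G\times_{P}V_{i}$; that is the correct completion of the argument. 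Your closing caveat is also substantive for the way the proposition is used later in the paper: for a general parabolic the generators so obtained include \emph{higher} Chern classes (already for $\mathrm{Gr}(2,4)$ the ring is not generated in degree two), whereas the curvature ansatz $d\alpha=\sum_{j} w_{j}\pi^{*}c_{j}$ and the index computations take the generators to lie in $H^{2}$; so the statement ``generated by the Chern classes as above'' needs either this broader reading of ``Chern classes'' or a restriction on which parabolics are allowed.
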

\begin{proof}
 Since $P$ is parabolic, it contains a Borel subgroup.  Each Schubert cell in $G/P$ lifts to one in $G/B.$  This gives an injective map $$H^{*}(G/P ; \bb{C}) \rightarrow H^{*}(G/B;\bb{C}).$$  Thus we need only to see that the Chern classes generate $H^{*}(G/B; \bb{C})$ which is known from ~\cite{Bor1}.
\end{proof}

\subsection{The Maslov Index and the Conley-Zehnder Index}
The Maslov index associates to each path of symplectic matrices a rational number. This particular definition originally appeared in Salamon and Robbins.  This index determines the grading for the chain complex in contact homology.  The Maslov index should be thought of as analagous to the Morse index for a Morse function.  The analogy isn't perfect, since the actual Morse theory we consider should give information about the loop space of the contact manifold.  Also note that our action functional has an infinite dimensional kernel.   
It should be noted that we will describe three indices in the following.  Two of them will be called the Maslov index.  This is unfortunate, but it will always be clear which Maslov index we will use at any particular time.
\begin{remark}Historically, the Maslov index arose as an invariant of loops of Lagrangian subspaces in the Grassmanian of Lagrangian subspaces of a symplectic vector space V.  In this setting the Maslov index is the intersection number of a path of Lagrangian subspaces with a certain algebraic variety called the Maslov cycle.
This is of course related to our Maslov index of a path of symplectic matrices, since we can consider a path of Lagrangian subspaces given by the path of graphs of the desired path of symplectic matrices. For more information on this see ~\cite{McDSal}, and ~\cite{SalRob}.
\end{remark}
\begin{remark}For a symplectic vector bundle, $E$,  over a Riemann surface, $\Sigma$ there is symplectic definition of the first Chern number $\langle c_{1}(E),\Sigma \rangle$.  It turns out that this Chern number is the loop Maslov index of a certain loop of symplectic matrices, obtained from local trivializations of $\Sigma$ decomposed along a curve $\gamma \subset \Sigma$.  This Chern number agrees with the usual definition, considering $E$ as a complex vector bundle, and can be obtained via a curvature calculation. 
\end{remark}
Let $\Phi(t)$, $t \in[0, T]$ be a path of symplectic matrices starting at the identity such that $det(I - \Phi(T)) \neq 0$\footnote{This is the \emph{non-degeneracy} assumption.  In the context of the Reeb vector field, this condition implies that all closed orbits are isolated}.  We call a number $t \in [0,T]$, a \emph{crossing} if $det(\Phi(t) - I)=0.$  A crossing is called \emph{regular} if the \emph{crossing form} (defined below) is non-degenerate.  One can always homotope a path of symplectic matrices to one with regular crossings, which, as we will see below, does not change the Maslov index. 

For each crossing we define the \emph{crossing form} 
$$\Gamma(t)v = \omega(v, D \dot{\Phi}(t)).$$ 
Where $\omega$ is the standard symplectic form on $\bb{R}^{2n}.$
\begin{defn}
 The Conley-Zehnder of the path $\Phi{t}$ under the above assumptions is given by:
$$\mu_{CZ}(\Phi) = \frac{1}{2}sign(\Gamma(0)) + \sum_{t\neq 0\,, \,t\, a\,\, crossing}sign(\Gamma(t))$$
\end{defn}
The Conley-Zehnder index satisfies the following axioms: 
\begin{itemize}
\item [Homotopy:] $\mu_{CZ}$ is invariant under homotopies which fix endpoints.
\item [Naturality:] $\mu_{CZ}$ is invariant under conjugation by paths in $Sp(n, \bb{R}).$
\item [Loop:] For any path, $\psi$ in $Sp(n, \bb{R}),$ 
and a loop $\phi$, $$\mu_{CZ}(\psi \cdot \phi) = \mu_{CZ}(\psi) + \mu_{l}(\phi).$$  Where $\mu_{l}$ is the Maslov index for loops of symplectic matrices.
\item [Direct Sum:]  If $n = n' + n''$ and $\psi_{1}$ is a path in $Sp(n', \bb{R})$ 
and $\psi_{2}$ is a path in $Sp(n'', \bb{R})$ 
then for the path $\psi_{1} \oplus \psi_{2} \in Sp(n', \bb{R}) \bigoplus Sp(n'', \bb{R}),$ we have 
$$\mu(\psi_{1} \oplus \psi_{2}) = \mu(\psi_{1}) + \mu(\psi_{2}).$$
\item [Zero:]  If a path has no eigenvalues on $S^1$, 
then its Conley-Zehnder index is 0.
\item [Signature:]  Let $S$ be symmetric and nondegenerate with $$||S|| < 2\pi.$$  Let $\psi(t) = exp(JSt)$, then $$\mu_{CZ}(\psi) = \frac{1}{2}sign(S).$$  
\end{itemize}
The Conley-Zehnder index is still insufficient for our purposes since we need the assumption that at time $T=1$ the symplectic matrix has no eigenvalue equal to 1.
We introduce yet another index for arbitrary paths from ~\cite{SalRob}.  We will call this index the Maslov index and denote it $\mu.$  

For this new index we simply add half of the signature of the crossing form at the terminal time of the path to the formula for the Conley-Zehnder index.
$$ \mu(\Phi(t)) = \frac{1}{2}sign(\Gamma(0)) + \sum_{t\neq 0\,, \,t\, a\,\, crossing}sign(\Gamma(t)) + \frac{1}{2}sign(\Gamma(T))$$
This Maslov index satisfies the same axioms as $\mu_{CZ}$ as well as the new property of catenation.  This means that the index of the catenation of paths is the sum of the indices.

\subsection{Indices for homotopically trivial closed Reeb orbits}
Let $\gamma$ be a closed orbit of a Reeb vector field.  Choose a symplectic trivialization of this orbit in $M,$ ie., take a map $u : D \rightarrow M$ from a disk into $M,$ with the property that the boundary of the image of $u$ is $\gamma$ and a bundle isomorphism between $u^* \xi$ and standard symplectic $\bb{R}^{2n}$, $(\bb{R}^{2n} , \omega_{0}).$   Now we look at the Poincare time $T$ return map of the associated flow (with respect to this trivialization, choosing a framing), where $T$ is the period of $\gamma.$  If the linearized flow  has no eigenvalue equal to $1$, we define the Conley-Zehnder index of $\gamma$ to be the Conley-Zehnder index of the path of matrices given by the linearized Reeb flow.  If there are eigenvalues equal to $1$ we calculate the Maslov index of the path of matrices coming from the flow (in an appropriate symplectic trivialization.)  Note that when there is no eigenvalue equal to $1$, the two indices agree.  

The Conley-Zehnder and Maslov indices depends on the choice of spanning disk or Riemann surface used in the symplectic trivialization.  Different choices of disks will change the index by twice the first Chern class\footnote{This is the reason that so often in the literature on contact homology authors insist that $c_{1}(\xi) = 0$.  This index defines the grading of contact homology so if this Chern class is non-zero we must be careful to keep track of which disks we use to cap orbits.} of $\xi$.  Intuitively, given a periodic orbit of the Reeb vector field, this index reveals how many times nearby orbits ``wrap around'' the given orbit.

\section{Morse-Bott Contact Homology}
Our invariants come from a sort of infinite dimensional Morse theory on the loop space of the symplectization $V$ of $M$.  In this context, the analogue of gradient trajectories is given by pseudo-holomorphic images of punctured Riemann surfaces.  For us a $J$-holomorphic curve is a $C^{\infty}$ function from a Riemann surface into $V$ whose total derivative is almost-complex linear with respect to $J$ on $V$, and the standard complex structure on the Riemann surface.  

We want to count so-called \emph{rigid} curves between critical points of a Morse functional.  To do this we study certain moduli spaces of \emph{stable} maps of punctured Riemann surfaces into $V.$  As we will see these curves are \emph{asymptotically cylindrical} over closed Reeb orbits.  The number of punctures corresponds to the number of orbits.  We denote such moduli spaces by $$\mathcal{M}_{0,J}^A(\gamma_{1}, \ldots ,\gamma_{s};\gamma_{s+1}, \ldots ,   \gamma_{s+k},V).$$  This reads ``the set of genus $0$ ie., \emph{rational} maps into $V$ with $s$ positive punctures and $k$ negative punctures representing the $2$ dimensional homology class $A$.  When no confusion can arise we will drop the ''$V$`` from the notation.  Also, when it is understood that there is only one positive puncture, we omit the semicolon.  
The trouble is that the relevant moduli spaces of curves have, at times, quite an elusive geometric structure.  We try to capture their structure as zero sets of a section of a certain infinite dimensional vector bundle over the space of maps from $\Sigma$ into $V.$  The relevant section is a Fredholm operator with a nice index given in terms of the dynamics of the Reeb vector field of $\alpha.$  The problem is that one cannot directly apply the implicit function theorem to determine the dimension of the moduli space since the linearized operator could have a nonzero cokernel.  Therefore, until we can prove that the operator is transverse to the zero section of the bundle we cannot conclude any kind of manifold structure or dimension formula from the Fredholm index of this operator. 

In the cases at hand we have an integrable almost complex structure so we can try to use Dolbeault cohomology of an appropriate complex.  As it turns out, this works under suitable positivity conditions, so we can determine transversality from bounds on spherical chern numbers in $M.$  

Contact homology is the homology of the differential graded algebra generated by periodic orbits of the Reeb vector field and graded by the reduced Conley-Zehnder index equal to $\mu_{CZ} +n-3.$  Whenever there are no Reeb orbits of reduced Conley-Zehnder index equal to $0$, $1$, or $-1$, we can actually use a simpler chain complex.  We then consider the graded \emph{vector space} generated by the closed Reeb orbits, in this case we call the homology \emph{cylindrical contact homology}.\footnote{There is an alternative to this called \emph{linearized contact homology} which ''chops off`` unwanted legs in pairs of pants via an augmentation.  This homology is well-defined even when cylindrical homology is not.  Whenever cylindrical homology \emph{is} well-defined these two invariants are the same.}  The reader should be wary that when the first Chern class of the contact distribution\footnote{We consider $\xi$ as a symplectic vector bundle, defining a first Chern class.} is non-zero, then in order to define a reasonable grading we must keep track of all choices made, spanning surfaces for Reeb orbits, for example.    
For cylindrical contact homology the differential is given as follows:
  $$ d\gamma=\sum_{\gamma^{'}} n_{\gamma , \gamma^{'}} \gamma^{'}.$$ Where $$n_{\gamma, \gamma^{'}} = 0$$\\*
whenever $$dim(\mathcal{M}_{0,J}^{A}(\gamma, \gamma^{'}, V))/\bb{R} \neq 0$$
otherwise it is equal to the signed count of these curves.\footnote{Recall that we have an action of $\bb{R}$ on the compact moduli space $\mathcal{M}_{0}^{A}(\gamma, \gamma^{'}, V, J)$ by translation in $t$, thus we make a count when the \emph{quotient} is $0$-dimensional.}
The following is from ~\cite{EGH}.
\begin{thm}
 Suppose that there are no Reeb orbits of Conley-Zehnder index $0, -1, 1.$  Then $d\circ d =0$, and cylindrical contact homology, $CH_{*}(M, \xi)$ is defined to be the homology of the above chain complex.  Moreover it is independent of all choices made, including $J$ and $\alpha$, in particular it is an invariant of the \textbf{contact structure} $\xi.$ 
\end{thm}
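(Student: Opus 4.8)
The plan is to deduce all three assertions from the SFT compactness and gluing theorems applied to the moduli spaces $\mathcal{M}^{A}_{0,J}(\gamma,\gamma')$ of finite-energy cylinders in $V$, using the hypothesis on Conley--Zehnder indices precisely to exclude the bad codimension-one degenerations. Throughout I would assume, as elsewhere in the paper, that these moduli spaces are cut out transversally, so that each is a manifold whose dimension equals its Fredholm index $\mu_{CZ}(\gamma)-\mu_{CZ}(\gamma')$, and that after quotienting by the $\bb{R}$-action the index-$1$ spaces are compact $0$-manifolds, making the coefficients $n_{\gamma,\gamma'}$ finite and $d$ well defined.

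For $d\circ d=0$, fix orbits $\gamma,\gamma''$ with $\mu_{CZ}(\gamma)-\mu_{CZ}(\gamma'')=2$. The coefficient of $\gamma''$ in $d(d\gamma)$ is $\sum_{\gamma'} n_{\gamma,\gamma'}\, n_{\gamma',\gamma''}$, the sum running over intermediate orbits $\gamma'$ of index one below $\gamma$. I would identify this number with the signed count of boundary points of the compactified one-dimensional quotient $\overline{\mathcal{M}^{A}_{0,J}(\gamma,\gamma'')}/\bb{R}$. By SFT compactness this compactification is a compact one-manifold with boundary whose boundary consists of two-level holomorphic buildings; the key point is that, since no orbit has index $0,\pm1$, a dimension count forces each such building to be a broken cylinder $\gamma\to\gamma'\to\gamma''$ with both levels rigid, while the same hypothesis rules out the competing degenerations (plane or sphere bubbling and contributions of bad Reeb orbits). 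A gluing theorem then puts the two-level buildings in bijection with the ends of the one-manifold, so the two counts of the boundary coincide; since the signed boundary count of a compact one-manifold is zero, $d\circ d=0$ and $CH_*(M,\xi)$ is defined.

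To prove independence of the auxiliary data I would use exact symplectic cobordisms. Given two regular pairs $(\alpha_0,J_0)$ and $(\alpha_1,J_1)$, interpolate by an almost complex structure on $V$ cylindrical and equal to $J_0$ near $-\infty$ and to $J_1$ near $+\infty$; counting the rigid, index-zero cylinders in this cobordism defines a degree-preserving map $\Phi$ on the generating vector spaces. Analyzing the one-dimensional cobordism moduli spaces by the same compactness-and-gluing argument, whose boundary now breaks off rigid cylinders at the two cylindrical ends, shows that $\Phi d=d\Phi$. Running the cobordism in reverse gives $\Psi$, and a generic one-parameter family of cobordisms joining the concatenation to the trivial one yields, by counting index-$(-1)$ curves in the family, a chain homotopy $K$ with $\Psi\Phi-\mathrm{id}=dK+Kd$; here again the absence of index $0,\pm1$ orbits is what keeps the family transverse and free of spurious degenerations. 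Hence $\Phi$ and $\Psi$ descend to mutually inverse isomorphisms on homology. Finally, any two contact forms for $\xi$ differ by a nowhere-zero function and any two compatible almost complex structures are joined through a contractible space, so the resulting homology depends only on $\xi$.

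The main obstacle is exactly the transversality I assumed at the outset: for a fixed $J$ the cylinder moduli spaces are generally not regular, because of multiply-covered and nodal configurations, so the dimension formulas and the identification of codimension-one boundary strata only hold after an abstract perturbation (polyfolds, Kuranishi structures, or the virtual machinery of \cite{EGH}). I would therefore carry out all of the above inside whichever such framework one adopts, treating SFT compactness and gluing as the analytic black boxes; the no-low-index hypothesis is what guarantees that the perturbed, virtually counted moduli spaces behave like the idealized transverse ones used above.
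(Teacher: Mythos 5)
Your outline is correct and follows essentially the same route as the paper's source: the paper itself offers no proof of this theorem, quoting it directly from \cite{EGH}, and your argument --- identifying $\sum_{\gamma'} n_{\gamma,\gamma'} n_{\gamma',\gamma''}$ with the signed boundary count of the compactified one-dimensional quotient moduli space, using the absence of index $0,\pm 1$ orbits to force all codimension-one degenerations to be two-level broken cylinders, and then establishing invariance via cobordism maps $\Phi$, $\Psi$ and a chain homotopy from a one-parameter family of cobordisms --- is precisely the standard proof given there (and in Bourgeois's thesis in the Morse--Bott setting). Your closing caveat that transversality must be supplied by an abstract perturbation framework is also consistent with the paper, which explicitly warns that such curve counts are only formal without a transversality result and supplies one separately, via Dolbeault-cohomology arguments, only in the integrable homogeneous and toric cases it studies.
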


This differential counts \emph{rigid} \footnote{Rigid, in this context means that these curves live in $0$-dimensional moduli spaces} $J$-holomporphic curves in the symplectization of $M$ asymptotically over $\gamma$ at $\infty$ and over $\gamma^{'}$ at $- \infty.$ 

\begin{remark}
 Contact homology and in particular, the Floer-like cylindrical contact homology, are just the tip of the iceburg of the larger \textbf{symplectic field theory} which contains many more subtle invariants.
\end{remark}

To use this complex, however, we really need the condition that the periodic Reeb orbits are isolated.  Since we are working exclusively with $S^1$-bundles, all Reeb orbits are periodic so are never isolated.  It turns out there there is a Morse-Bott version of all of this theory.  

To make this work we actually work with Morse theory on the base.  If we work in an orbifold we have different orbifold strata given by the various orbit types of the original action.  Recall that we assume all actions are strongly Hamiltonian.
We now introduce some terminology.
\begin{defn}
 The \textbf{action functional} is the map $$\mathcal{A} : C^{\infty}(S^1, M) \rightarrow \bb{R}$$ given by $$\gamma \mapsto \int_{\gamma} \alpha,$$ where $\alpha$ is a contact 1-form for $(M, \xi).$
 \end{defn}
Note that critical points of $\mathcal{A}$ are periodic orbits of the Reeb vector field.  
\begin{defn} Let $(M, \xi)$ be a contact manifold with contact form $\alpha.$  The \textbf{action spectrum}, $$\sigma(\alpha) = \{r \in \bb{R} | r = \mathcal{A}(\gamma)\}$$ for $\gamma$ a periodic orbit of the Reeb vector field.
 
\end{defn}
\begin{defn} Let $T \in \sigma(\alpha).$  Let $$N_{T} = \{ p \in M | \phi_{p}^{T} = p\},$$ $$S_{T} = N_{T}/S^{1},$$ where $S^1$ acts on $M$ via the Reeb flow.  Then $S_{T}$ is called the \textbf{orbit space} for period $T$.
 \end{defn}
Here the orbit spaces are precisely the orbifold strata when $M$ is considered as an $S^1$ orbibundle.

For our Morse-Bott set-up we assume that our contact form is of Morse-Bott type, ie.  
\begin{defn}
A contact form is said to be of Morse-Bott type if
\begin{enumerate}
 \item[i.] The action spectrum:
$$\sigma(\alpha) := \{r \in \mathbb{R} : \mathcal{A}(\gamma) = r,\, for\, some\, periodic\, Reeb\, orbit\, \gamma.\}$$ is discrete.
\item[ii.] The sets $N_{T}$ are closed submanifolds of M, such that the rank of $d\alpha|_{N_{T}}$ is locally constant and $$T_{p}(N_{T}) = ker(d\phi_{T} - I).$$
\end{enumerate}
\end{defn}
\begin{remark}These conditions are the Morse-Bott analogues for the functional on the loop space of $M.$
\end{remark}

Notice that in the case of $S^1$ bundles this is always satisfied.  The key observation, as we soon shall see, is that we can relate $J$-holomorphic curves to Morse theory on the symplectic base.  Since we consider only quasi-regular contact manifolds here, we can always approximate the contact structure by one with a dense open set of periodic orbits of period 1, say, and a finite collection of strata of orbits of smaller period, each such stratum has even dimension and has codimension at least 2 (other than the dense set of regular points, of course).

\subsection{Moduli}
Before we describe the Morse-Bott chain complex we need to describe the moduli spaces of pseudoholomorphic curves with which we will be working.  So, as before let $(M, \xi)$ be a contact manifold, $V$ its symplectization, and $J$ an almost complex structure on $V$ compatible with the transverse symplectic structure on $M.$   The curves that we're interested in are $J$-holomorphic maps from punctured $S^2$'s into the the symplectization of our contact manifold.  Such curves are \emph{asymptotically cylindrical} over closed Reeb orbits.
\begin{defn}
Let $\Sigma$ be a Riemann surface with a set of punctures $Z = \{z_{1}, \ldots, z_{k}\}$  A map $$u = (a(s,t), f(s,t)): \Sigma \setminus Z \rightarrow V$$ is called \textbf{asymptotically cylindrical} over the set of Reeb orbits $\gamma_{1}, \ldots, \gamma_{k}$ if for each $z_{j} \in Z$ there are cylindrical coordinates centered at $z_{j}$ such that 
$$lim_{s \rightarrow \infty} f(s, t) = \gamma (T t)$$ 
and
$$lim_{s \rightarrow \infty} \frac{a(s, t)}{s} = T$$
\end{defn}    

Here are some precise statements from ~\cite{BEHWZ}:
\begin{prop}Suppose that $\alpha$ is of Morse,\footnote{Here we mean that for each periodic Reeb orbit, the time 1 return map has no eigenvalue equal to $1$.} or Morse-Bott type. Let $$u = (a, f ) : \bb{R}^{+} \times \bb{R}/\bb{Z} \rightarrow (\bb{R} \times M, J)$$ be a $J$-holomorphic curve of finite energy.\footnote{For the most general definitions of energy see ~\cite{BEHWZ}}  Suppose that the image of $u$ is unbounded in $\bb{R} \times M$. Then there exist a number $T \neq 0$ and a periodic orbit $\gamma$ of $R_{\alpha}$ of period $|T|$ such that

$$lim_{s \rightarrow \infty} f(s, t) = \gamma (T t)$$ 
and
$$lim_{s \rightarrow \infty} \frac{a(s, t)}{s} = T.$$

\end{prop}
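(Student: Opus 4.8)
The plan is to follow the asymptotic analysis of Hofer and of Hofer--Wysocki--Zehnder, adapted to the Morse--Bott setting, the whole argument resting on the finite energy hypothesis. I would first exploit finiteness of the energy to obtain a uniform gradient bound on the end. Concretely, I would argue by contradiction: if $\sup_{s \geq s_0,\, t}|\nabla u(s,t)| = \infty$, pick points $z_n = (s_n, t_n)$ with $s_n \to \infty$ and $|\nabla u(z_n)| \to \infty$, and rescale by $|\nabla u(z_n)|$ (using a quantitative rescaling lemma to control the rescaled gradient) to extract, in $C^\infty_{loc}$, a nonconstant finite energy plane $v : \bb{C} \to \bb{R} \times M$. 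By the standard structure theory such a plane is either constant, which is excluded, or converges at infinity to a closed Reeb orbit and so has $d\alpha$-energy bounded below by the smallest element of the action spectrum $\sigma(\alpha)$, which is positive and isolated by the Morse--Bott condition (i). Since $u$ has finite total energy, the energy on $[s_0, \infty) \times S^1$ tends to $0$ as $s_0 \to \infty$, so for $s_0$ large no such bubble can form, a contradiction. This yields the gradient bound.

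Next I would show that the $d\alpha$-energy concentrated on the far end decays, that is, $\int_{[s,\infty)\times S^1} f^*d\alpha \to 0$ as $s \to \infty$. Combined with the gradient bound, this lets me take translates
$$u_n(s,t) = \bigl(a(s+s_n,t) - a(s_n,0),\, f(s+s_n,t)\bigr)$$
and extract, by Arzel\`{a}--Ascoli together with elliptic bootstrapping, a $C^\infty_{loc}$ limit $u_\infty$ which is a $J$-holomorphic cylinder of vanishing $d\alpha$-energy. A finite energy cylinder with zero $d\alpha$-energy must be a trivial cylinder $(s,t) \mapsto (Ts + \mathrm{const},\, \gamma(Tt))$ over a closed Reeb orbit $\gamma$, with $T = \lim_{s\to\infty}\int_{\{s\}\times S^1} f^*\alpha \in \sigma(\alpha)$; the hypothesis that the image is unbounded forces $T \neq 0$. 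This already establishes both limit formulas along the subsequence $s_n$.

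The real work --- and the step I expect to be the main obstacle --- is upgrading this subsequential convergence to genuine convergence to a single limiting orbit. Here I would invoke the asymptotic operator $A_\gamma = -J_0\frac{d}{dt} - S(t)$ built from the linearized Reeb flow along $\gamma$. In the nondegenerate case $0$ is not in its spectrum, and writing $u$ in exponential coordinates about the trivial cylinder one shows the deviation satisfies a differential inequality governed by the spectral gap of $A_\gamma$, forcing exponential decay and hence a unique limit orbit. In the Morse--Bott case, condition (ii) identifies $\ker(A_\gamma)$ with the tangent space $T_p(N_T) = \ker(d\phi_T - I)$ to the critical submanifold, so $0$ is an eigenvalue but is isolated, with the normal directions enjoying a genuine spectral gap. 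I would then run the Hofer--Wysocki--Zehnder exponential-decay argument in these normal directions, using discreteness of $\sigma(\alpha)$ to pin down the period and a connectedness argument along the end to rule out drift between distinct orbits inside $N_T$. This exponential-decay estimate, establishing uniqueness of the asymptotic limit from the spectral gap of the asymptotic operator, is the technical heart of the proof and the part demanding the most care.
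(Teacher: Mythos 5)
The paper offers no proof of this proposition: it is quoted verbatim as one of the ``precise statements from \cite{BEHWZ}'', so the only comparison available is with the argument in that reference and in Hofer--Wysocki--Zehnder's asymptotics papers, which is exactly the route you sketch (bubbling-off/rescaling to get gradient bounds from finite energy, decay of the $d\alpha$-energy on the end giving subsequential convergence to trivial cylinders, and the spectral gap of the asymptotic operator in the directions normal to $N_T$ to pin down a single limiting orbit in the Morse--Bott case). Your outline is faithful to that standard proof, correctly isolates the uniqueness-of-the-asymptotic-limit step as the technical heart, and I see no substantive gap beyond the need to be explicit that it is the $d\alpha$-energy, not the full Hofer energy, that decays on the end.
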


This immediately implies 
\begin{prop}
Let $(S,j)$ be a closed Riemann surface and let $$Z=\{z_1,\ldots , z_k \}\subset S$$ be a set of punctures.  Every $J$-holomorphic curve $$F = (a,f):(S \setminus Z)\rightarrow \bb{R} \times M$$ of finite energy and without removable singularities is asymptotically cylindrical near each puncture $z_i$ over a periodic orbit $\gamma_i$ of $R_{\alpha}$.
\end{prop}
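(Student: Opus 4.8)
The plan is to reduce the statement to the preceding proposition by localizing the analysis near each puncture and exploiting the conformal equivalence between a punctured disk and a half-cylinder. First I would fix a puncture $z_i$ and choose a holomorphic coordinate $z$ on a neighborhood $U_i \subset S$ with $z(z_i)=0$, so that $U_i \setminus \{z_i\}$ is biholomorphic to a punctured disk $D^* = \{0 < |z| < 1\}$. The punctured disk is in turn conformally equivalent to the half-cylinder $\bb{R}^{+} \times \bb{R}/\bb{Z}$ via the standard biholomorphism $z = e^{-2\pi(s+it)}$, under which the end $s \to \infty$ corresponds to $z \to 0$, i.e. to approaching $z_i$. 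Composing $F$ with this identification produces a map $u_i = (a_i, f_i) : \bb{R}^{+} \times \bb{R}/\bb{Z} \to \bb{R} \times M$ which is again $J$-holomorphic, since the condition $du \circ j = J \circ du$ is preserved under precomposition with a biholomorphism of the domain.

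Next I would verify the two hypotheses needed to invoke the preceding proposition for $u_i$, namely finite energy and unboundedness of the image. Finite energy is inherited: the energy of $F$ is an integral of nonnegative densities over $S \setminus Z$, so its restriction to a half-cylinder neighborhood of $z_i$ is bounded above by the total energy of $F$, which is finite by hypothesis. For unboundedness, I would invoke the dichotomy coming from removal of singularities: if the image of $u_i$ were bounded in $\bb{R} \times M$, then the finite-energy $J$-holomorphic map would extend continuously, hence smoothly, across $z_i$, making $z_i$ a removable singularity of $F$. This is excluded by hypothesis, so the image of $u_i$ must be unbounded.

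With both hypotheses in hand, the preceding proposition applies directly to each $u_i$, producing a nonzero number $T_i$ and a periodic orbit $\gamma_i$ of $R_\alpha$ of period $|T_i|$ with
$$\lim_{s \to \infty} f_i(s,t) = \gamma_i(T_i t), \qquad \lim_{s \to \infty} \frac{a_i(s,t)}{s} = T_i.$$
Unwinding the conformal identification, these are precisely the cylindrical coordinates centered at $z_i$ demanded by the definition of asymptotically cylindrical, so $F$ is asymptotically cylindrical over $\gamma_i$ near $z_i$. Repeating this for each of the finitely many punctures $z_1,\ldots,z_k$ completes the argument.

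I expect the only genuinely delicate step to be the unboundedness dichotomy. One must be careful that the phrase ``without removable singularities'' is exactly what rules out the bounded alternative, and that the removal-of-singularities theorem in this symplectization setting truly requires the finite-energy bound established above; without finite energy one cannot exclude wild behavior near the puncture. Everything else is the routine bookkeeping of passing from a punctured disk to a half-cylinder and checking that $J$-holomorphicity and finite energy localize.
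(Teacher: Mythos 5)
Your argument is correct and is exactly the route the paper intends: the paper simply asserts that the half-cylinder proposition ``immediately implies'' this one, and your write-up supplies the standard details of that implication (conformal identification of a punctured-disk neighborhood with a half-cylinder, localization of finite energy, and the removable-singularity dichotomy to rule out the bounded case). No substantive difference from the paper's approach.
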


These propositions are extremely important to us because they show that it is reasonable to define gradient trajectories between Reeb orbits to be $J$-holomorphic curves in the symplectization.   
We have even more, ie., a Gromov compactness theorem, which says that although the moduli spaces are not necessarily compact, we can compactify them by adding stable curves of height $2$.  Even better, via a gluing construction ~\cite{Bourg1} we show that the boundary of the moduli space is equal to set of height $2$ curves.  

As above we consider the moduli space of generalized $J$-holomorphic curves from an $s$-times punctured Riemann surface into $V$ asymptotically cylindrical over the orbit spaces $S_1, \ldots, S_{s}$ representing the class $A$:
$$\mathcal{M}^{A}_{0,J}(S_{1}; \ldots, S_{s}, V).$$

In this notation the first orbit space is from the \emph{positive} puncture, all others are negative.  These moduli spaces are the analogues of the gradient trajectories of Morse theory.  We only count them when they come in zero dimensional families.  Thus we need a dimension formula for these spaces.  
\begin{prop}
 The virtual dimension for the moduli space of generalized genus $0$ $J$-holomorphic asymptotic over the orbit spaces $S_{T_{0}}, \ldots, S_{T_{1}},\ldots, S_{T_{s}}$ (with 1 positive, and $s$ negative punctures) representing $A$ is equal to
$$(n-3)(1-s)  + \mu(S_{T_{0}}) + \frac{1}{2}dim(S_{T_{0}}) - \sum_{i=0}^s (\mu(S_{T_{i}}) + \frac{1}{2} dim (S_{T_{i}})) + 2c_{1}(\xi, \Sigma),$$ where $\Sigma$ is a Riemann surface used to define the symplectic trivialization and homology class $A.$ 
\end{prop}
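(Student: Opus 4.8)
The plan is to read off the virtual dimension as the Fredholm index of the linearization $D_u$ of the $\delbar_J$-operator at a curve $u$, corrected by the contributions of the variable conformal structure on the domain and of the reparametrization group. Since transversality is not assumed here, ``virtual'' means precisely the number the implicit function theorem would return from this index were $D_u$ surjective. I would organize the computation in two stages: first establish the formula when all asymptotic orbits are non-degenerate, and then install the Morse-Bott correction terms $\tfrac{1}{2}\dim(S_{T_i})$ that account for the orbits appearing in positive-dimensional families, following the strategy of Bourgeois ~\cite{Bourg1, BEHWZ}.

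For the non-degenerate stage, I would realize $D_u$ as a real-linear Cauchy-Riemann operator on sections of $u^{*}TV$ over the punctured sphere $\dot{S} = S^{2}\setminus Z$, working in exponentially weighted Sobolev spaces so that the cylindrical ends render $D_u$ Fredholm. The Riemann-Roch theorem for such operators computes its index in terms of the relative first Chern number $c_{1}(\xi,\Sigma)$ and the Conley-Zehnder indices $\mu_{CZ}$ of the asymptotic orbits, the trivialization being the one furnished by the spanning surface $\Sigma$ that also pins down the homology class $A$. Adjoining the deformations of the conformal structure of $\dot{S}$ and subtracting the infinitesimal automorphisms converts the leading term into the reduced coefficient $n-3$, yielding the standard symplectic-field-theory expression $(n-3)\chi(\dot{S}) + 2c_{1}(\xi,\Sigma) + \sum_{+}\mu_{CZ} - \sum_{-}\mu_{CZ}$ with $\chi(\dot{S}) = 2-2g-\#Z$; for genus $0$ with one positive and $s$ negative punctures this Euler-characteristic term is exactly $(n-3)(1-s)$.

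The Morse-Bott stage is the real content. Here each asymptotic operator acquires $0$ in its spectrum, with kernel of dimension $\dim(S_{T_i})$ corresponding to the directions tangent to the orbit space $S_{T_i}$; this is exactly the Morse-Bott degeneracy recorded by the condition $T_{p}(N_T)=\ker(d\phi_T - I)$. To keep $D_u$ Fredholm one perturbs the exponential weight off this critical value, and the generalized Maslov index $\mu(S_{T_i})$ is defined as the Robbins-Salamon index $\mu$ of the path of symplectic matrices given by the linearized Reeb flow transverse to the family, precisely the index introduced in the Maslov-index subsection. Comparing the weighted indices across the zero eigenvalue, and noting that the asymptotic evaluation maps now take values in the orbit spaces rather than at isolated orbits, contributes one half of $\dim(S_{T_i})$ at each puncture, while $\mu(S_{T_i})$ enters with sign $+$ at the positive puncture $S_{T_0}$ and $-$ at each negative puncture. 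Equivalently, one may perturb the contact form by Morse functions on the $S_{T_i}$, reduce to the non-degenerate count orbit-by-orbit, and sum over the resulting critical points, recovering the same correction. Assembling the pieces gives the stated formula.

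The main obstacle will be the bookkeeping of conventions in this second stage: fixing the weight so that $D_u$ is Fredholm, matching the sign of the half-dimension correction between positive and negative punctures, and verifying that the evaluation maps at the ends are transverse to the critical submanifolds so that the fiber-product dimension count is legitimate rather than merely virtual. A further subtlety, absent from Bourgeois's manifold setting, is that each $S_{T_i}$ is an orbifold stratum of $\Z$, so that both $c_{1}(\xi,\Sigma)$ and $\mu(S_{T_i})$ may be rational and the spectral-flow argument must be carried out equivariantly on the uniformizing charts; here the Lerman-Tolman theorem, guaranteeing that the strata are even-dimensional with even Morse-Bott index, is what keeps the half-dimension shifts integral and the resulting grading well-defined.
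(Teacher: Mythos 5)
The paper does not actually prove this proposition: it states the formula and refers the reader to Bourgeois's thesis \cite{Bourg1}, remarking only that Bourgeois derives it from the Riemann--Roch theorem rather than from a spectral-flow argument. Your outline --- Fredholm index of the linearized operator on exponentially weighted spaces over the punctured sphere, Riemann--Roch with the trivialization fixed by the spanning surface $\Sigma$, the $(n-3)\chi(\dot S)$ term from conformal deformations and automorphisms, then Morse--Bott half-dimension corrections from moving the weights across the zero eigenvalue of the asymptotic operators --- is precisely that route, so it is essentially the same approach as the proof the paper points to. One thing worth flagging: your sign bookkeeping (the half-dimension entering with $+$ at every puncture, $\mu$ entering with $-$ only at the negative punctures, and the sum taken over negative punctures only) is the correct one, and it exposes a typo in the paper's displayed formula, where the sum runs from $i=0$ and would cancel the positive-puncture contribution entirely; the paper's own specialization to $s=1$ in the regular case, $\mu(S_{T^+}) - \mu(S_{T^-}) + 2n-2 + 2c_{1}(\xi,\Sigma)$, agrees with your version and not with the displayed one. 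Your closing caveat about the orbifold strata is also well placed: Bourgeois works over manifolds, so the equivariant spectral-flow argument on uniformizing charts that you ask for is genuinely additional content, which the paper addresses only later (via Satake's integration formula in the Maslov index lemma) and not in the proof of this proposition.
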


In cylindrical contact homology, since we only are keeping track of cylinders, we take $s=1$  and this formula reduces to $$ \mu(S_{T_{+}}) + \frac{1}{2}dim(S_{T_{+}}) + \mu(S_{T_{-}}) + \frac{1}{2} dim (S_{T^{-}}) + 2c_{1}(\xi, \Sigma).$$
Of course if $\xi$ has a regular structure this boils down to $$ \mu(S_{T^{+}}) - \mu(S_{T^{-}}) + 2n-2 + 2c_{1}(\xi, \Sigma).$$
For a proof of this formula see ~\cite{Bourg1}.  Bourgeois' proof is of interest as traditionally these kinds of results come from a spectral flow analysis.  Bourgeois, however, makes interesting use of the Riemann-Roch theorem.
 
We want to understand the structure of the moduli space since our Morse-like chain complex uses these curves to construct the differential.  The reader should be aware that the formula for the dimension of the moduli space above is really a \emph{virtual} dimension until some sort of transversality is achieved for some $\delbar_{J}$-operator. This formula is obtained via Fredholm analysis on the space  of $C^{\infty}$ maps from $S^2 \setminus \{z_1, z_2, \ldots, z_{j}\}$ into $V.$ The $\delbar_{J}$ turns out to be a Fredholm section of a certain infinite dimensional bundle over this space whose kernel is precisely the set of $J$-holomorphic curves.  The Fredholm index $\delbar_{J}$ is the dimension formula above.  The trouble is that a priori, we cannot rule out a non-zero cokernal, hence our dimension formula could be \emph{under counting} the relevant curves.  There have been many attempts at transversality proofs, and it seems as though the new \emph{polyfold} theory of Hofer, Zehnder and Wysocki should solve the problem.  There are also proofs using virtual cycle techniques, (cf. ~\cite{Bourg1}) however even here it seems that there may be potential gaps.  Therefore we show how, in some cases, we can justify the validity of our curve counts through more elementary geometric considerations.

In the cases that we are considering in this paper, the almost complex structure will be integrable, thus we can use algebro-geometric techniques to find conditions for regularity of $J$.\footnote{The word \emph{regularity} is over used.  Here we mean that for this $J$, the $\delbar_{J}$ operator is surjective, as a section in a suitable infinite dimensional vector bundle.}  

Now let us describe the relationship between moduli spaces of stable curves in a symplectic orbifold and the moduli space of curves into the symplectization of its Boothby-Wang manifold.  Notice that the symplectization $W$ is just the associated line (orbi)bundle to the principle $S^1$-(orbi)bundle\footnote{Of course, in the situations we are dealing with in this paper, $M$ is a manifold even if it is the total space of an orbibundle.}, $M$, with the zero section removed.   Given as many marked points as punctures we actually get a fibration, here curves upstairs are sections of $L$ with zeroes of order $k$ and poles of order $l$ once we fix the phase of a section we actually get unique curves.  This is described for the case of \emph{regular} contact structures in ~\cite{EGH}.  For $S^{1}$-bundles over $\bb{CP}^1$ with isolated cyclic singularities Rossi extended this result in ~\cite{PR}, we'll actually need an extension of this to higher dimension.  The point here is that we want to coordinate our curve counts upstairs with the ``Gromov-Witten'' curve count downstairs.  In the case where the base is an orbifold we must make sure that we can get an appropriate curve in the sense of Gromov-Witten theory on orbifolds.  It should be noted that in symplectizations all moduli spaces come with an $\bb{R}$-action by translation.  Whenever we talk about $0$-dimensional moduli spaces, we really mean that we are considering $1$-dimensional moduli spaces quotiented out by the $\bb{R}$-action giving $0$-dimensional manifolds\footnote{Actually these moduli spaces in general are branched orbifolds with corners, however in the cases that we consider in this paper they really are manifolds.}.

The following lemma comes from ~\cite{CR}.
\begin{lemma}
 Suppose that $u$ is a $J$-holomorphic curve into the symplectic orbifold $\mathcal{Z}$, then either $u$ is completely contained in the orbifold singular locus or it intersects it in only finitely many points.
\end{lemma}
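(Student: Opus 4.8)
The plan is to reduce the global dichotomy to a purely local unique-continuation statement and then propagate it by a connectedness argument. Write $\Sigma$ for the connected domain of $u$ (arguing componentwise in general) and set $A = u^{-1}(\mathcal{Z}_{\mathrm{sing}})$, where $\mathcal{Z}_{\mathrm{sing}} \subset \mathcal{Z}$ is the singular locus. Since $\mathcal{Z}_{\mathrm{sing}}$ is closed and $u$ is continuous, $A$ is closed, and the claim is equivalent to the assertion that $A$ is either all of $\Sigma$ or discrete. The heart of the matter is to show that if a point $p \in A$ is non-isolated in $A$, then $u$ maps an entire neighborhood of $p$ into $\mathcal{Z}_{\mathrm{sing}}$. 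Granting this, let $B$ be the set of non-isolated points of $A$; the local statement makes $B$ open, while $B$ is closed because a limit of non-isolated points of the closed set $A$ is again non-isolated in $A$. By connectedness of $\Sigma$ either $B = \Sigma$, so that $u(\Sigma) \subset \mathcal{Z}_{\mathrm{sing}}$, or $B = \emptyset$, so that $A$ is discrete and hence finite once we invoke compactness of $\Sigma$ (or properness of $u$ near the punctures in the asymptotically cylindrical setting).

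For the local statement I would pass to an orbifold uniformizing chart. Near $q = u(p)$ choose a chart $(\tilde U, \Gamma, \phi)$ with $\tilde U \subset \bb{C}^{n-1}$, $\Gamma$ a finite group acting by $J$-preserving (in our integrable setting, biholomorphic) transformations, and $\phi : \tilde U / \Gamma \to \mathcal{Z}$. After shrinking the domain and, if necessary, passing to a local branched cover adapted to the isotropy of $u(p)$, the curve $u$ lifts to a $J$-holomorphic map $\tilde u$ into $\tilde U$. In the chart the singular locus is $\tilde U_{\mathrm{sing}} = \bigcup_{g \neq e} \mathrm{Fix}(g)$, and because each $g$ preserves $J$ its fixed-point set $\mathrm{Fix}(g)$ is a $J$-holomorphic (here complex-linear, after straightening) submanifold of $\tilde U$. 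Since $\Gamma$ is finite this is a finite union, so if $p$ is non-isolated in $A$ then, after passing to a subsequence of nearby intersection points, infinitely many of them land in a single $\mathrm{Fix}(g)$ and accumulate at $p$.

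The hard part will be the unique-continuation step for a single $J$-holomorphic submanifold. I would straighten $\mathrm{Fix}(g)$ to a coordinate subspace $\{z_{k+1} = \cdots = z_{n-1} = 0\}$ and write $\tilde u = (\tilde u_1, \ldots, \tilde u_{n-1})$; the normal components $\tilde u_{k+1}, \ldots, \tilde u_{n-1}$ then satisfy a perturbed Cauchy--Riemann equation of the form $\delbar \tilde u_i + C_i \tilde u_i = 0$ with bounded zeroth-order coefficient. The Carleman similarity principle then applies: each such component is locally the product of a nonvanishing continuous factor with a holomorphic function, so it either vanishes identically or has only isolated zeros. An accumulation of zeros of the normal vector $(\tilde u_{k+1}, \ldots, \tilde u_{n-1})$ at $p$ therefore forces every normal component to vanish identically near $p$, i.e. $\tilde u$ maps a neighborhood of $p$ into $\mathrm{Fix}(g) \subset \tilde U_{\mathrm{sing}}$, which is exactly the desired local statement. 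In the integrable case one may instead simply quote the classical identity theorem for holomorphic maps and bypass the similarity principle altogether.

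One technical wrinkle to handle carefully is the interaction between the several isotropy strata that may meet at $p$ and the choice of lift: the union defining $\tilde U_{\mathrm{sing}}$ is finite, so the pigeonhole reduction to a single $\mathrm{Fix}(g)$ is legitimate, and $\tilde u$ is determined only up to the $\Gamma$-action, which permutes the fixed-point sets without affecting whether $\tilde u$ is locally contained in their union. Isolated orbifold points of the domain likewise do not disturb the accumulation argument. With these points in place, the open--closed connectedness argument of the first paragraph completes the proof.
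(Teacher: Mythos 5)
The paper offers no proof of this lemma at all --- it is simply quoted from Chen--Ruan \cite{CR} --- so your argument can only be measured against the standard one. Your overall architecture is the right one and essentially reproduces it: reduce to the local claim that a non-isolated point of $A=u^{-1}(\mathcal{Z}_{\mathrm{sing}})$ forces $u$ to map a whole neighborhood into the singular locus, propagate by the open--closed argument, and prove the local claim in a uniformizing chart by noting that the singular set there is the finite union $\bigcup_{g\neq e}\mathrm{Fix}(g)$ of almost complex submanifolds, pigeonholing to a single $\mathrm{Fix}(g)$, and killing accumulating intersections by unique continuation. Two minor points: the Carleman similarity principle should be applied to the normal vector $v=(\tilde u_{k+1},\dots,\tilde u_{n-1})$ as a solution of a first-order \emph{system} $\delbar v+Cv=0$ with matrix-valued $C$, yielding $v=\Phi\sigma$ with $\Phi$ pointwise invertible and $\sigma$ holomorphic, rather than componentwise as you phrase it (the conclusion is the same); and in the punctured, asymptotically cylindrical setting discreteness of $A$ gives finiteness only after controlling the asymptotics, which you do flag.

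The genuine gap is the sentence asserting that ``the curve $u$ lifts to a $J$-holomorphic map $\tilde u$ into $\tilde U$.'' For a map into the underlying space of $\tilde U/\Gamma$ that is merely continuous and $J$-holomorphic off the singular locus, local lifts need not exist: already for $\mathbb{C}/(\mathbb{Z}/2)\cong\mathbb{C}$ with $\zeta=z^{2}$, the map $w\mapsto\zeta=w$ is holomorphic away from the origin but admits no continuous lift near $0$. The obstruction is the monodromy of $u|_{D\setminus A}$ with respect to the covering $\tilde U_{\mathrm{reg}}\to(\tilde U/\Gamma)_{\mathrm{reg}}$, and when $p$ is a non-isolated point of $A$ the topology of $D\setminus A$ is precisely what you do not yet control --- so the lift is being assumed exactly in the case where it is hardest to produce. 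Worse, in the logical order of the paper the existence and uniqueness of such local lifts is the content of the \emph{next} lemma, which is proved \emph{using} this one; assuming liftability here is therefore at least in tension with, and arguably circular with respect to, the intended development. To close the gap you must either declare that your $J$-holomorphic curves are good maps in the sense of \cite{CR}, so that compatible local lifts are part of the data, or supply an independent argument near a non-isolated point of $A$ (for instance, a unique-continuation argument applied directly to a $\Gamma$-invariant defining function of the singular set, such as $x\mapsto\prod_{g\neq e}\|x-gx\|^{2}$ pushed down to the chart, which avoids lifting $u$ altogether). As written, the key local step rests on an unproved assertion.
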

We use this to prove:
\begin{lemma}
Let $u : \Sigma  \rightarrow \mathcal{Z}$ be a non-constant $J$-holomorphic map between a Riemann surface  and a symplectic orbifold. Then there is a unique orbifold structure on $\Sigma$ and a unique germ of a $C^{\infty}$-lift $\tilde{u}$ of $u$ such that $u$ is an orbicurve.
\end{lemma}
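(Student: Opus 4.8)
The plan is to reduce the statement to a purely local problem near the finitely many points of $\Sigma$ whose images land in the orbifold singular locus $\mathcal{Z}_{sing} \subset \mathcal{Z}$, and there to build the lift by passing to a branched cover determined by local monodromy. By the preceding lemma, since $u$ is non-constant, either $u(\Sigma) \subseteq \mathcal{Z}_{sing}$ or $u^{-1}(\mathcal{Z}_{sing})$ is a finite set $\{p_1, \ldots, p_k\} \subset \Sigma$. I would treat the generic case first, where $u^{-1}(\mathcal{Z}_{sing})$ is finite. Away from these points $u$ maps into the smooth (free) locus of $\mathcal{Z}$, where $\mathcal{Z}$ is locally an honest manifold; there the orbifold structure on $\Sigma$ is trivial and the germ of $\tilde u$ is simply $u$ read off in a manifold chart. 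Hence all the content is concentrated near each $p_i$.

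Near a fixed $p = p_i$, choose a local uniformizing system for $\mathcal{Z}$ about $u(p)$: a smooth chart $\pi : \tilde U \to \tilde U / G = U \subset \mathcal{Z}$ with $G$ a finite group, and let $\tilde J$ denote the $G$-invariant lift of $J$ to $\tilde U$. Shrinking a coordinate disk $D$ about $p$, I may assume $u(D \setminus \{p\}) \subset U \setminus \mathcal{Z}_{sing}$, over which $\pi$ restricts to a genuine covering map with deck group $G$. The restriction $u|_{D \setminus \{p\}}$ then determines a monodromy homomorphism $\pi_1(D \setminus \{p\}) \cong \bb{Z} \to G$; let $g \in G$ be the image of the positive generator and $m = \mathrm{ord}(g)$. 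Passing to the $m$-fold cyclic cover $w \mapsto w^m = z$ of the punctured disk kills this monodromy, so $u$ composed with the covering lifts through $\pi$ to a $\tilde J$-holomorphic map $\tilde u : \tilde D \setminus \{0\} \to \tilde U$. This integer $m$ is precisely the order of the orbifold point I place on $\Sigma$ at $p$.

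It remains to extend $\tilde u$ across the puncture and to establish uniqueness. Since $\tilde u$ takes values in the relatively compact chart $\tilde U$ it is bounded, so by the removable singularity theorem for pseudoholomorphic maps it extends to a smooth $\tilde J$-holomorphic map on all of $\tilde D$; its germ at $0$, together with the branching order $m$, is the required local orbicurve datum. Uniqueness of the orbifold structure follows because $m$ is intrinsically the local monodromy degree of $u$ and so is forced; uniqueness of the germ of the lift holds up to the residual action of the isotropy group $G$, which is exactly the ambiguity built into the notion of an orbifold morphism. Gluing these local models over the trivial structure on $\Sigma \setminus \{p_1, \ldots, p_k\}$ produces the global orbicurve.

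Finally, when $u(\Sigma) \subseteq \mathcal{Z}_{sing}$, I would apply the same analysis to $u$ regarded as a map into the stratum of $\mathcal{Z}_{sing}$ containing its image, which is itself a symplectic orbifold with locally constant generic isotropy along $u(\Sigma)$; the transverse isotropy then supplies the local group data and the argument above goes through in the normal directions. The main obstacle is precisely this local step: verifying that the $G$-valued monodromy is the only obstruction to lifting, that a single cyclic branched cover of order $m = \mathrm{ord}(g)$ resolves it, and that the resulting lift is genuinely $\tilde J$-holomorphic so that removal of singularities applies. Everything else is bookkeeping of the covering-space and orbifold-morphism formalism.
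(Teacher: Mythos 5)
Your argument is correct, and it is genuinely different from --- and considerably more self-contained than --- the proof in the paper. The paper's proof is a two-line sketch that leans entirely on the Boothby--Wang setup: it notes (via the preceding Chen--Ruan lemma) that the curve meets the singular locus in finitely many points unless it lies inside it, identifies each such point with a puncture asymptotic to a Reeb orbit of non-generic period lying in some orbit space $S_{T_k}$, and then simply declares that one takes a local uniformizing chart equivariant with respect to the cyclic group $\bb{Z}_{T_k}$. Your proof instead runs the general covering-space argument: over the free locus the uniformizing map is a principal $G$-covering, the restriction of $u$ to a punctured disk has a local monodromy class $g\in G$, the cyclic branched cover of order $m=\mathrm{ord}(g)$ kills it and produces a $\tilde J$-holomorphic lift, and removal of singularities closes it up; uniqueness is read off from the intrinsic meaning of $m$. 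What your route buys is a proof valid for any symplectic orbifold target with no reference to Reeb dynamics, and an actual justification of the \emph{uniqueness} claims, which the paper's sketch never addresses; what the paper's route buys is brevity and a direct dictionary between the local groups on $\Sigma$ and the multiplicities of the asymptotic Reeb orbits, which is what gets used later in the index computations. Two small points to tighten: the removable singularity theorem needs finite energy rather than mere boundedness of the image (here finite energy of $\tilde u$ follows because it is the lift of $u$ precomposed with a degree-$m$ cover of a compact disk on which $u$ is smooth, so its energy is $m$ times a finite quantity); and the uniqueness of the orbifold structure at $p$ implicitly uses the convention that the local group $\bb{Z}_m$ is required to act effectively through the lift (otherwise any multiple of $m$ would also admit a lift), which is the standard minimality/representability requirement in the definition of an orbicurve and is worth stating.
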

\begin{proof}
First let us assume that the marked points are all mapped into the singular locus, since otherwise the curve only intersects the singular locus in a finite number of points.  Now $u_{z_{i}}$ corresponds to a closed Reeb orbit of non-generic period, ie., a curve in $S_{T_{k}}$, say. Take an element from the moduli space of curves into $W$ asymptotically cylindrical over $S_{T_{k}}$ in some slot.  We need only to take a local uniformizing chart equivariant with respect to $\bb{Z}_{T_{k}}.$
\end{proof}

From this we actually get a fibration.
\begin{prop}
 There is a fibration $$pr :\mathcal{M}_{0, J}(S_{T_{1}}; S_{T_{2}}, \ldots , S_{T_{k}}) \rightarrow \mathcal{M}_{0, k}(a_{1}, \ldots, a_{k}).$$ 
\end{prop}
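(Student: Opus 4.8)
The plan is to build $pr$ by fiberwise projection to the base and then to identify each fiber with $\bb{C}^{*}$. First I would define the map on points. Given a generalized $J$-holomorphic curve $u : \Sigma \setminus Z \to W$ that is asymptotically cylindrical over the orbit spaces $S_{T_1}, \ldots, S_{T_k}$, I compose with the bundle projection $\pi : W \to \mathcal{Z}$, where $W = L \setminus \{0\}$ is the complement of the zero section in the line orbibundle $L$ associated to the principal $S^1$-orbibundle $M$. Since $J$ is chosen so that $\pi$ is pseudoholomorphic, $\pi \circ u$ is a finite-energy $J$-holomorphic map $\Sigma \setminus Z \to \mathcal{Z}$. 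The asymptotic propositions from \cite{BEHWZ} quoted above guarantee that near each puncture $z_i$ the curve $u$ is asymptotic to a Reeb orbit lying over a single point of $\mathcal{Z}$, so by removal of singularities $\pi \circ u$ extends to a genuine $J$-holomorphic map $\bar{u} : \Sigma \to \mathcal{Z}$. By the orbicurve lemma just proved, $\bar{u}$ carries a unique orbifold structure with isotropy $\bb{Z}_{T_i}$ at $z_i$, hence defines an element of $\mathcal{M}_{0,k}(a_1, \ldots, a_k)$ with weights $a_i$ read off from the periods $T_i$. I set $pr(u) = \bar{u}$.

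Next I would identify the fiber of $pr$ over a fixed orbicurve $\bar{u}$. A curve $u$ with $pr(u) = \bar{u}$ is precisely a nowhere-vanishing holomorphic section of $\bar{u}^{*}L$ over $\Sigma \setminus Z$ which, near each puncture, has a zero or a pole (according to whether the puncture is negative or positive) of the order dictated by the asymptotic orbit in $S_{T_i}$. Because $u$ takes values in $W = L \setminus \{0\}$, such a section can have neither extra zeros nor extra poles away from $Z$: an interior zero would force $u$ to meet the zero section, and an interior pole would create a spurious puncture. Hence every lift is a meromorphic section of $\bar{u}^{*}L$ with divisor exactly $D = \sum_i m_i z_i$, where $m_i$ is the signed order prescribed by $T_i$. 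Here integrability of $J$ is essential: it turns the $\delbar_{J}$-equation into the classical Cauchy-Riemann equation and lets us treat these lifts as honest meromorphic sections, exactly as in the regular case of \cite{EGH} and its orbifold extension \cite{PR}.

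I would then compute the fiber. Existence of such a section requires $\deg(\bar{u}^{*}L) = \sum_i m_i$, which is precisely the compatibility between the homology class $A$ and the asymptotic data built into the moduli space, and which holds automatically on the image of $pr$. Granting existence, any two lifts differ by a global meromorphic function on the compact orbifold Riemann surface $\Sigma$ with trivial divisor, hence by a nonzero constant; therefore the fiber is a $\bb{C}^{*}$-torsor. The factor $\bb{C}^{*} = \bb{R} \times S^1$ is exactly the radial symplectization translation together with the Reeb phase, matching the observation that fixing the phase of a section pins down a unique curve. Thus $pr$ is fiberwise a principal $\bb{C}^{*}$-bundle.

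The main obstacle is local triviality in families: I must show that as $\bar{u}$ varies the $\bb{C}^{*}$-fibers assemble into a locally trivial fibration. The degree $\deg(\bar{u}^{*}L) = \langle c_1(L), [\bar{u}] \rangle$ is locally constant, and on a genus-zero orbicurve of this degree the relevant $h^1$ vanishes, so by cohomology and base change the spaces of sections have constant rank and vary holomorphically, supplying local trivializations. The genuinely delicate point is the orbifold bookkeeping at the marked points: one must work with orbifold line bundles and orbifold Riemann-Roch, check that the fractional contributions at the $\bb{Z}_{T_i}$-points combine to an integer degree so that a lift exists at all, and verify that the equivariant uniformizing charts furnished by the previous lemma glue consistently across the family. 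This is where the extension of \cite{EGH} to orbifold bases in \cite{PR}, together with the Chen-Ruan orbifold framework of \cite{CR}, does the real work.
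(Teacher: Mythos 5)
Your argument is correct and follows essentially the same route the paper takes: the paper offers no formal proof beyond the preceding discussion identifying the symplectization with the line orbibundle $L$ minus its zero section, so that lifts of a curve in $\mathcal{Z}$ are meromorphic sections with zeros and poles of prescribed order at the punctures, unique up to phase and translation (i.e., a $\bb{C}^{*}$-torsor), with the orbifold bookkeeping handled by the orbicurve lemma and the references to Rossi and Chen--Ruan. Your write-up simply makes explicit the steps (removal of singularities for $\pi\circ u$, the divisor argument for the fiber, and local triviality) that the paper leaves implicit.
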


With this understanding of the moduli space, assuming now that $J$ is integrable we see that that the linearized Cauchy-Riemann operator is the $\delbar$ operator of the right Dolbeault complex on $\mathcal{Z}$. This leads to the following criterion for regularity of $\delbar_{J}$ at each $u$ adapted to the $S^1$ bundle case from ~\cite{McDSal2}.  First we note that $u: \bb{CP}^1 \rightarrow V$, we can look at $u^*T(V),$ whose characteristic classes look like those of $u^{*}\xi.$  Over $\bb{CP}^{1}$, $u^*\xi$ splits as a sum of line bundles:
$$u^*\xi = \bigoplus_{j} L_{j}.$$   
This splitting doesn't necessarily work for general orbifolds as ambient spaces (unless the sphere doesn't intersect the orbifold singular locus at all, but for toric orbifolds this splitting principle always works ~\cite{Gmn97}.  Of course, in the case of a regular contact form, this is just the classical Groethendieck splitting principle. 
\begin{thm}[Regularity criterion for genus $0$ moduli spaces when $J$ is integrable]
Suppose that $J$ is integrable. Suppose that $$\langle c_{1}(L_{j}), A \rangle \geq -2 + s -t$$ for every $A \in H_{2}(\mathcal{Z})$ which is represented by a 2-sphere.  Then the linearized Cauchy-Riemann operator is surjective and the genus $0$ moduli space of curves with $s$ positive punctures and $t$ negative punctures is a smooth manifold of dimension given by the Fredholm index.  In the case that $\mathcal{Z}$ is an orbifold, we require that $$c_{1}(L_{j}) \geq  \sum_{\alpha}(1-\frac{1}{m_{\alpha}})c_{1}(\mathcal{O}(D_{\alpha}))  -2-s-t,$$ where $D_{\alpha}$ are branch divisors.
\end{thm}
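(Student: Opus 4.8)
The plan is to reduce surjectivity of the linearized Cauchy--Riemann operator to a cohomology vanishing statement for line bundles on $\bb{CP}^1$, exploiting the integrability of $J$. First I would invoke the standard fact (adapted from ~\cite{McDSal2}) that when $J$ is integrable the real-linear operator $D_u$ obtained by linearizing $\delbar_J$ at a curve $u$ agrees, up to a zeroth-order perturbation that does not affect the cokernel, with the complex-linear Dolbeault operator $\delbar$ acting on sections of the holomorphic bundle $u^*\xi$. The cokernel of this operator, taken in the weighted Sobolev completion that matches the asymptotically cylindrical behavior at the punctures, is then identified with a sheaf cohomology group $H^1$ of $u^*\xi$ twisted by the divisor encoding the $s+t$ punctures.

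Next I would use the Grothendieck splitting $u^*\xi = \bigoplus_j L_j$ over $\bb{CP}^1$ together with the fact that $\delbar$ is diagonal with respect to this splitting, so that surjectivity of $D_u$ reduces to the vanishing of $H^1(\bb{CP}^1, L_j)$ for each $j$. Here I would record the elementary computation that for $\mathcal{O}(d)$ on $\bb{CP}^1$ one has, by Serre duality, $H^1(\mathcal{O}(d)) \cong H^0(\mathcal{O}(-d-2))^*$, which vanishes precisely when $d \geq -1$; the weight data imposed at the $s$ positive and $t$ negative punctures shifts the effective degree so that the vanishing condition becomes exactly $\langle c_1(L_j), A \rangle \geq -2 + s - t$. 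Once every $H^1(L_j)$ vanishes the cokernel of $D_u$ is trivial, hence $D_u$ is surjective; the implicit function theorem for Fredholm sections then endows the moduli space with a smooth manifold structure of dimension equal to the Fredholm index computed in the preceding dimension formula.

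For the orbifold case I would replace ordinary line bundles by orbifold (``V-'') line bundles and the Grothendieck splitting by the toric splitting principle of ~\cite{Gmn97}. The essential modification is that the relevant vanishing theorem is now governed by orbifold Serre duality, in which the effective degree of $L_j$ is measured against the orbifold canonical bundle; the branch divisors $D_\alpha$ with isotropy orders $m_\alpha$ contribute the fractional correction $\sum_\alpha (1 - \tfrac{1}{m_\alpha}) c_1(\mathcal{O}(D_\alpha))$. Subtracting this correction together with the puncture contribution yields the stated bound $c_1(L_j) \geq \sum_\alpha (1 - \tfrac{1}{m_\alpha}) c_1(\mathcal{O}(D_\alpha)) - 2 - s - t$, after which the argument concludes exactly as in the manifold case.

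The main obstacle I anticipate is the orbifold step: one must ensure that the Dolbeault description of $D_u$ remains valid across the singular locus where $u$ meets the orbifold points, and that the weighted Sobolev spaces are set up in local uniformizing charts equivariant with respect to the isotropy groups $\bb{Z}_{m_\alpha}$ (as arranged in the preceding lemma). Checking that the orbifold Riemann--Roch / Kawasaki correction produces precisely the fractional divisor term, and that the splitting of ~\cite{Gmn97} is compatible with these equivariant charts, is the delicate part; the manifold case is by comparison a routine application of the cohomology of line bundles on $\bb{CP}^1$.
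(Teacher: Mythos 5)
Your proposal follows essentially the same route as the paper: identify the linearized operator with the Dolbeault $\delbar$ on $L_{j}$ twisted by the divisor recording the punctures, apply Serre duality on $\bb{CP}^1$ to reduce surjectivity to an $H^{1}$-vanishing governed by a degree bound, and in the orbifold case replace $c_{1}(L_{j})$ by the orbifold first Chern class carrying the $\sum_{\alpha}(1-\tfrac{1}{m_{\alpha}})c_{1}(\mathcal{O}(D_{\alpha}))$ correction. The only additions are your explicit remarks on the zeroth-order perturbation, the weighted Sobolev setup, and the Guillemin splitting, all of which are consistent with (and in places more careful than) the paper's argument.
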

\begin{proof}
Let $z_{1}, \ldots, z_{s}, \ldots z_{s +t}$ be distinct points on $S^2.$  Consider the divisor $$D= k_{1}z_{1} + \ldots + k_{s}z_{s} - k_{s+1}z_{s+1} - \ldots - k_{s+t}z_{s+t}.$$  Then the Cauchy-Riemann operator is just the $\delbar$-operator of the Dolbeault complex for the line orbibundle $L_{j} \otimes \mathcal{O}(D).$
$$ \delbar : \Omega^{0}(\bb{CP}^1, L_{j} \otimes \mathcal{O}(D)) \rightarrow \Omega^{0,1}(\bb{CP}^1, L_{j} \otimes \mathcal{O}(D)).$$
The cokernel of $\delbar$ is just the $(0,1)$ cohomology of that complex.  But we have the following isomorphisms: 
$$H_{\delbar}^{0,1}(\bb{CP}^1, L_{j} \otimes \mathcal{O}(D)) \simeq H_{\delbar}^{1,0}(\bb{CP}^1, (L_{j} \otimes \mathcal{O}(D))^*)^* \simeq H_{\delbar}^{0,1}(\bb{CP}^1, (L_{j} \otimes \mathcal{O}(D))^* \otimes K).$$
For the last group to be $0$, we must have $$c_{1}(L_{j} \otimes \mathcal{O}(D))^* \otimes K) <0.$$  This happens whenever $$c_{1}(L_{j}) > -2 - deg(D).$$  
In the quasi-regular case we check this in the orbifold sense. So we consider the orbifold first Chern class :
$$c_{1}^{orb}= c_{1}(L_{j}) - \sum_{\alpha}(1-\frac{1}{m_{\alpha}})c_{1}(\mathcal{O}(D_{\alpha}))$$ where the sum at the end is non-zero only in the presence of branch divisors, $D_{\alpha}.$  Here we must bound this below by $-2 -deg(D).$  
\end{proof}
\begin{thm}
$$\langle c_{1}(L), A \rangle \geq -2 + s -t$$ whenever $L$ is a line (orbi)bundle obtained by the Boothby-Wang fibration whose total space is either of the following:
\begin{enumerate}
\item[i.] a homogeneous contact manifold
\item[ii.] a toric Fano contact manifold.
\end{enumerate}
\end{thm}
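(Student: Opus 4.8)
The plan is to reduce the stated inequality to a positivity property of the pulled-back tangent bundle of the base orbifold $\Z$, and then to verify that property separately in the homogeneous and toric Fano cases using the algebro-geometric descriptions of $T\Z$ available in each. First I would exploit that $M \to \Z$ is a Boothby-Wang (orbi)bundle, so that the projection $d\pi$ restricts to a complex-linear isomorphism $\xi \cong \pi^* T\Z$ of the horizontal distribution with the pullback of the tangent bundle. Consequently, for a $J$-holomorphic sphere $u : \bb{CP}^1 \to V$ with underlying base curve $\bar u = \pi \circ u$, one has $u^*\xi \cong \bar u^* T\Z$, and hence each $L_j \cong \mathcal{O}(a_j)$ in the Grothendieck splitting with $\sum_j a_j = \langle c_1(T\Z), \bar u_*[\bb{CP}^1]\rangle$. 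The statement to prove is then precisely that each $a_j \geq -2 + s - t$, i.e.\ that the pullback of $T\Z$ to the sphere carries no Grothendieck summand more negative than $-2 + s - t$. Since the curves contributing to the differential carry $s = 1$ and at least one negative puncture, the right-hand side is at most $-2$, so it suffices to bound the summands below by $-2$.

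For the homogeneous case (i), write the base as a generalized flag manifold $\Z = G/P$. The infinitesimal $G$-action furnishes a surjection $\mathfrak{g} \otimes \mathcal{O}_{G/P} \twoheadrightarrow T(G/P)$, so $T(G/P)$ is globally generated. Global generation is preserved under the pullback $\bar u^*$, and a globally generated bundle on $\bb{CP}^1$ splits into summands of nonnegative degree; hence $a_j \geq 0 \geq -2 + s - t$. This is the clean half of the argument and needs no Fano hypothesis.

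For the toric Fano case (ii), I would use the generalized Euler (Cox) sequence $0 \to \mathfrak{t}^* \otimes \mathcal{O} \to \bigoplus_\rho \mathcal{O}(D_\rho) \to T\Z \to 0$, where the $D_\rho$ are the torus-invariant divisors, and pull it back along $\bar u$, so that $\bar u^* \mathcal{O}(D_\rho) \cong \mathcal{O}(D_\rho \cdot A)$ on $\bb{CP}^1$. By the toric splitting principle of~\cite{Gmn97} the relevant spheres --- the lifts of the gradient trajectories of the moment-map components, which are exactly the one-dimensional torus-orbit closures attached to the edges of the moment polytope --- have splitting type computed from the wall relations, i.e.\ from the intersection numbers $D_\rho \cdot A$. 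The Fano condition, that $-K_\Z = \sum_\rho D_\rho$ is ample (equivalently, that the defining polytope is reflexive), forces $-K_\Z \cdot A \geq 1$ and, together with primitivity of the ray generators across each wall, bounds each $D_\rho \cdot A$, and hence each summand $a_j$, below by the required $-2 + s - t$; in the genuinely orbifold case one must additionally carry the branch divisors $D_\alpha$ through, exactly as in the orbifold correction term of the previous theorem.

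The hard part will be the toric case. Positivity of the total degree $-K_\Z \cdot A > 0$ controls only $\sum_j a_j$, not the individual summands, and an arbitrary rational curve on a Fano toric variety can carry a very negative summand compensated by a very positive one. The crux is therefore to use that the classes $A$ occurring here are not arbitrary but are the invariant edge-curves produced by the moment map, whose splitting type is governed by the wall relations of the reflexive polytope; bounding the wall coefficients from below via reflexivity is the step that must be executed with care, and it is where the Fano hypothesis is genuinely needed.
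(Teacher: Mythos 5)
Your treatment of case (i) is essentially the paper's: the paper simply invokes Proposition 7.4.3 of McDuff--Salamon with $u^*TM$ replaced by $u^*\xi$, and that proposition is exactly the global-generation argument you spell out via the surjection $\mathfrak{g}\otimes\mathcal{O}_{G/P}\twoheadrightarrow T(G/P)$, using $\xi\cong\pi^*T\mathcal{Z}$ to transport it to the contact setting. That half is fine and needs no further comment.

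For case (ii) there is a genuine gap, and you have in fact located it yourself in your final paragraph: you assert that reflexivity of the polytope ``bounds each $D_\rho\cdot A$, and hence each summand $a_j$, below by the required $-2+s-t$,'' but you never prove this, and it is precisely the nontrivial content of the theorem. The Fano condition is an ampleness statement about $-K_{\mathcal{Z}}=\sum_\rho D_\rho$, so by itself it controls only the total degree $\sum_j a_j=\langle c_1(T\mathcal{Z}),A\rangle$ of the Grothendieck splitting, not the individual $a_j$; a quotient of $\bigoplus_\rho\mathcal{O}(D_\rho)$ need not inherit any lower bound on its splitting type from positivity of the sum (already on the toric Fano surface $\mathrm{Bl}_p\bb{P}^2$ one of the $D_\rho$ is the $(-1)$-curve, so individual $\mathcal{O}(D_\rho)$ are not nef). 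Restricting attention to the invariant edge-curves and their wall relations is the right idea, but then the claimed inequality must actually be extracted from the primitive relations of the reflexive polytope (and in the orbifold case corrected by the branch divisors), and that computation is the theorem --- a plan that ends with ``this is the step that must be executed with care'' has not discharged it. For what it is worth, the paper's own proof of (ii) is a one-sentence assertion that the Fano condition ``gives sections and positivity of Chern classes'' for a splitting taken on the toric orbifold and then pulled back; your route through the Euler/Cox sequence is more explicit about where the difficulty lives, but neither argument, as written, closes it.
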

\begin{proof}
The proof of (i) is nearly the same Proposition 7.4.3 in ~\cite{McDSal2} with $u^{*}TM$ replaced with $u^*(\xi).$ Note that in case (i) we are always dealing with manifolds at each level rather than with orbifolds.
For (ii), for a splitting of $$u^*(\xi) = \bigoplus_j L_{j}$$ we get sections and positivity of Chern classes via the Fano condition.  Here the splitting takes place in the toric orbifold, then all the lines are pulled back to the sphere.  
\end{proof}   

\begin{cor}
 For homogeneous and toric contact manifolds the dimensions of all genus $0$ moduli spaces are given by the Fredholm index as predicted.
\end{cor}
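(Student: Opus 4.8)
The plan is to obtain the corollary by feeding the positivity estimate of the preceding theorem into the regularity criterion for integrable $J$; the statement is essentially the conjunction of those two results, so the work is to check that every hypothesis of the regularity criterion is genuinely met in the homogeneous and toric cases. First I would verify the standing assumption that $J$ is integrable. In the homogeneous case the base $\mathcal{Z}$ is a generalized flag manifold $G/P$ carrying its natural $G$-invariant K\"ahler structure, whose compatible almost complex structure is integrable; in the toric case $\mathcal{Z}$ is a toric K\"ahler orbifold, again with integrable compatible complex structure. Since the almost complex structure on the symplectization is built from $J_0$ on $\xi$ together with $J\frac{\partial}{\partial t}=R_\alpha$, integrability of the base structure is exactly what licenses replacing the linearized Cauchy-Riemann operator at each $u$ by the genuine $\delbar$-operator of a Dolbeault complex, as the regularity criterion requires.

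Next I would invoke the splitting $u^*\xi=\bigoplus_j L_j$. Over $\bb{CP}^1$ this holds by the Grothendieck splitting principle in the homogeneous (manifold) case and by the toric splitting principle of \cite{Gmn97} in the toric orbifold case, so for every spherical curve $u$ the pulled-back contact distribution decomposes into line (orbi)bundles. The preceding theorem then supplies precisely the bound $\langle c_1(L_j), A\rangle \geq -2+s-t$ for every spherical class $A$: in case (i) this is the analogue of Proposition 7.4.3 of \cite{McDSal2} with $u^*TM$ replaced by $u^*\xi$, and in case (ii) the Fano condition forces the needed positivity of the Chern number of each summand. (Here the toric case is understood in the Fano regime consistent with the hypothesis $\sum_j \tilde{w_j} > 1$ of Theorem \ref{thm:main}.)

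With these inputs the regularity criterion applies verbatim: the cokernel of $\delbar$ is identified by Serre duality with $H^{0,1}_{\delbar}(\bb{CP}^1, (L_j\otimes\mathcal{O}(D))^*\otimes K)$, which vanishes once the degree bound holds, so $\delbar_J$ is surjective at every such $u$. Surjectivity is exactly transversality of the Fredholm section to the zero section, so the implicit function theorem yields that each genus $0$ moduli space is a smooth manifold whose actual dimension coincides with the Fredholm index computed earlier. I expect the only real subtlety to lie in the orbifold (toric) case, where one must pass to the orbifold first Chern class $c_1^{orb}=c_1(L_j)-\sum_\alpha(1-\frac{1}{m_\alpha})c_1(\mathcal{O}(D_\alpha))$ and verify the vanishing in the orbifold Dolbeault complex via orbifold Riemann-Roch and Serre duality; the Fano hypothesis in case (ii) is what prevents the branch-divisor corrections from destroying the positivity, and bookkeeping these corrections correctly is the main obstacle to making the argument airtight.
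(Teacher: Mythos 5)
Your proposal is correct and follows essentially the same route as the paper, which states this corollary without proof as the immediate conjunction of the integrable-$J$ regularity criterion and the preceding positivity theorem for $\langle c_{1}(L_{j}), A\rangle$. Your added observation that the toric case must be read in the Fano regime (since the positivity theorem assumes Fano while the corollary's wording drops it) is a fair and accurate caveat about the paper's own hypotheses.
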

We would like now to set up the Morse-Bott chain complex.  This was originally done in ~\cite{Bourg1} and discussed for circle bundles in ~\cite{EGH}.  We have already discussed some of the basic setup, now, much like the case in Morse theory we would like to relate the Morse-Bott case to the generic case.  The idea is to perturb our contact structure so it's periodic orbits are in $1$-$1$ correspondence with the critical points of some Morse-function.  In our case we would like to use our moment maps to get a perfect Morse or Morse-Bott function $f.$
So first for appropriate $\epsilon$ and our Morse or Morse-Bott function$f$ we take the new contact form
$$ \alpha_{f} = (1 + \epsilon f) \alpha.$$
Then critical points of $f$ correspond to periodic orbits of $\alpha_{f}.$
If $f$ is a perfect Morse function as in the toric case we take this as our contact form.  Else, suppose $f$ is Morse-Bott, then we choose Morse functions on each critical submanifold.  Note that in the case of a Hamiltonian action of a compact Lie group all such submanifolds have even index and even dimension.  By $G$-invariance of the moment map this restricts to all orbit spaces.  
In this case the periodic orbits are in $1$-$1$ correspondence with critical points of Morse functions on the critical submanifolds of $f.$
We want to relate the Conley-Zehnder indices of the generic form with those of the original.  Since we have a $1$-$1$ correspondence between critical points and orbits, we will think of the chain complex associated to $\alpha_{f}$ as critical points of $f.$  The index is the grading so we'll write the Conley-Zehnder index of the orbit corresponding to $p$, $\mu_{CZ}(\gamma_{p})$ as $|p|.$  In the Morse case we have
$$ |p| = \mu(S_{T_{k}}) -\frac{1}{2}dim(S_{T_{k}}) + ind_{p} f.$$\\*  
In the case of a Morse-Bott function, we just proceed as in ordinary Morse theory to get a new Morse function on each critical submanifold, but now in this formula we use the Morse-Bott indices. 
Now we can define the differential for Morse-Bott contact homology.
$$dp = \partial p + \sum_{q} n_{pq} q.$$
Where $\partial p$ is just the Morse-Smale-Witten boundary operator, $n_{pq}$ is similar to the coeffecient in the generic case, and $ind_{p}$ is the Morse index for critical points.  Bourgeois proves, in his thesis ~\cite{Bourg1}, that this homology computes contact homology.  For us, the particular form of the differential does not matter much since it will vanish for index reasons.  

\begin{thm}(Bourgeois)
 When the homology defined above exists it is isomorphic to the standard contact homology for non-degenerate contact forms. 
\end{thm}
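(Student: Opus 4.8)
The plan is to exhibit an explicit chain-level identification between the Morse-Bott complex and the chain complex of a genuine non-degenerate contact form, rather than to prove invariance abstractly. By construction the perturbed form is $\alpha_f = (1+\epsilon f)\alpha$, so the generators of the two theories are already in bijection: critical points of the chosen Morse functions on the critical submanifolds $S_T$ on one side, and non-degenerate Reeb orbits of $\alpha_f$ on the other. The content of the theorem is therefore that (a) this bijection is correct for small $\epsilon$ and respects the grading, and (b) the perturbed differential counting rigid $J$-holomorphic cylinders agrees with the Morse-Bott differential $dp = \partial p + \sum_q n_{pq} q$. Once both are established, the homologies coincide, and independence of all choices follows from the invariance theorem quoted above.

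First I would pin down the orbit correspondence and the index. Fixing an action window $[0,R]$ and taking $\epsilon$ small, I would show that every periodic orbit of $R_{\alpha_f}$ of action at most $R$ lies in a small tubular neighborhood of one of the critical submanifolds $N_T$, and that the non-degenerate orbits there are exactly the ones sitting over critical points of $f$ restricted to $S_T$. This is a standard perturbation computation: the Reeb flow of $\alpha_f$ differs from that of $\alpha$ by a term controlled by $df$, and non-degeneracy of the return map is equivalent to $p$ being a non-degenerate critical point of $f$. The Conley-Zehnder index of the resulting orbit is then the Morse-Bott normal contribution $\mu(S_T) - \tfrac{1}{2}\dim(S_T)$ plus $\mathrm{ind}_p f$, which is exactly the grading $|p|$ recorded above; the signature axiom for $\mu_{CZ}$ applied to the linearized flow in the normal directions gives the normal term, while the eigenvalues tangent to $S_T$ contribute the Morse index.

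The heart of the argument, and the main obstacle, is the compactness-and-gluing correspondence for the differential. As $\epsilon \to 0$, the rigid cylinders for $\alpha_f$ degenerate, and I would invoke the SFT compactness theorem to show that they converge to \emph{cascades}: finite chains of honest $J$-holomorphic cylinders in the symplectization of $(M,\alpha)$, asymptotic to orbits lying in the critical submanifolds, joined by finite-length gradient flow segments of $f$ inside the $S_T$. Conversely, a gluing theorem must show that each rigid cascade is the limit of a unique (modulo the $\bb{R}$-action) rigid cylinder for all sufficiently small $\epsilon$. Sorting the cascades by type reproduces exactly the two pieces of $dp$: cascades with no holomorphic level are Morse gradient trajectories and give $\partial p$, while cascades containing non-constant cylinders give the $\sum_q n_{pq} q$ term. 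This step uses the transversality input assumed earlier, together with a gluing analysis that matches the asymptotic evaluation maps of the holomorphic levels against the stable and unstable manifolds of the gradient flow; it is precisely here that the delicate Fredholm and exponential-decay estimates enter, and where the analytic gaps alluded to in the introduction reside.

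Finally, with the generators matched, the gradings matched, and the differentials matched, the Morse-Bott complex is literally the chain complex of the non-degenerate form $\alpha_f$, so their homologies are equal. Since the right-hand side is, by the invariance theorem of \cite{EGH}, independent of the auxiliary choices of $f$, $\epsilon$, and $J$, the Morse-Bott homology is a well-defined invariant agreeing with standard contact homology, which is the assertion of the theorem.
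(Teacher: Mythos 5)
The paper does not actually prove this theorem: it is quoted verbatim from Bourgeois's thesis \cite{Bourg1}, and the only justification given in the text is the citation. So there is no ``paper's own proof'' to compare against; what you have written is, in outline, the argument of the cited reference itself. Your strategy --- perturb to $\alpha_f=(1+\epsilon f)\alpha$, match generators with critical points of the auxiliary Morse functions, compute the Conley--Zehnder index as the normal Maslov contribution $\mu(S_T)-\tfrac{1}{2}\dim(S_T)$ plus the Morse index, and then identify the perturbed differential with the Morse--Bott differential via a degeneration of rigid cylinders into cascades (holomorphic levels joined by gradient segments) together with a converse gluing theorem --- is exactly the architecture of Bourgeois's proof, and the sorting of cascades into the $\partial p$ and $\sum_q n_{pq}q$ terms is the right bookkeeping.

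That said, as a proof rather than a road map the proposal has real gaps, and you are candid about where they are. The compactness statement (rigid $\alpha_f$-cylinders converge to cascades as $\epsilon\to 0$) and its converse (every rigid transverse cascade glues to a unique rigid cylinder for small $\epsilon$) are the entire analytic content of the theorem; asserting that ``a gluing theorem must show'' this is naming the lemma, not proving it. Two further points need closing. First, the orbit correspondence holds only below an action threshold $R$ with $\epsilon=\epsilon(R)$, so the identification of homologies requires a direct-limit argument over $R$ and a check that the limit recovers the full theory; you open this window but do not close it. Second, invoking the invariance theorem of \cite{EGH} for the non-degenerate side presupposes that $\alpha_f$ has no closed orbits of degree $-1$, $0$, or $1$ in the relevant action range --- this is precisely the standing hypothesis ``when the homology defined above exists,'' and it should be verified to transfer from the Morse--Bott data to the perturbed form via your index formula rather than assumed. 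None of these objections points to a wrong turn; they mark the distance between a correct outline and the proof in \cite{Bourg1}.
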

\section{Morse-Bott Contact Homology in the Homogeneous and Toric Cases}
Now we apply the results from the previous section.  Let us first set some notation.  Suppose first that $(M, \xi)$ is compact, simply connected, and admits a strongly Hamiltonian action of a Lie group as discussed in the introduction which is of Reeb type.  Then we know that there is a quasi-regular contact form $\alpha$ for $(M, \xi)$ equivariant with respect to the action.
As above, let $S_{T_{k}}$ denote the stratum in $\mathcal{Z} = M/(S^1)$ corresponding to Reeb orbits of period $T_{k}.$  Let $\Gamma_j$ denote the local uniformizing group for the stratum $S_{T_{k}}.$  Recall that each stratum is a K\"{a}hler sub-orbifold of $\mathcal{Z}.$ 
In what follows assume that $H^{*}(\mathcal{Z}; \bb{C})$ is a truncated polynomial ring generated by elements in $H^{2}(\mathcal{Z}; \bb{C})$, ie., the Chern classes coming from the symplectic reduction defining $\mathcal{Z}$ as a symplectically reduced orbifold.  Let us write such a basis of $H^2(\mathcal{Z};\bb{C})$ as $\{c_{1}, \ldots, c_{k} \}.$  Now choose $1$ forms $\tilde{c_{j}}$ representing the $c_{j}$'s   Now we just consider circle bundles over $\mathcal{Z}$ by choosing connection $1$-forms $\alpha$ with curvature $$ d \alpha = \sum_{j}\pi^{*} w_{j} \tilde{c_{j}}.$$  Notice that for $\mathcal{Z}$ a toric orbifold, this construction yields all possible toric contact structures of Reeb type.   Note that we implicitly choose a symplectic form $\omega = \sum w_{i} \tilde{c_{i}}$ on $\mathcal{Z}$ during this process.  Then $$c_{1}(T(\mathcal{Z})) = \sum \tilde{w_{i}} \tilde{c_i},$$  where $\tilde{w_i}$ is obtained via the spectral sequence for the Boothby-Wang fibration.

\begin{remark}
In the case of contact reduction in $\bb{C}^n$ by a circle (where the action is of Reeb type) the coeffecients of $|z_{j}|^2$ in the (circle) moment map can be chosen to be the $\tilde{w_{j}}$'s. 
\end{remark}   

Now we choose elements of $H_{2}(\mathcal{Z}; \bb{Z})$, $A_{1}, \ldots, A_{n}$, with $$\langle \tilde{c_{i}},A_{i} \rangle =1.$$  This is possible because the cohomology is a truncated polynomial ring generated by the $c_{j},$ all elements having even degree.  Now let $$A= \sum_{j} A_{j}.$$ Then for any K\"{a}hler suborbifold $ i :S \hookrightarrow \mathcal{Z},$ $$\sum_{i}\langle i^{*}\tilde{c_{i}},A \rangle$$ is nonzero.  Thus we can also do this for each $S_{T_j}$ by pulling the Chern classes back along the inclusion maps, then choosing homology classes in each stratum as above in terms of $i_{j}^{*} \tilde{c_{i}},$ where $i_{j}: S_{T_{j}} \rightarrow \mathcal{Z}$ is the inclusion, and $\{c_{i} \}$ are the Chern classes generating $H^{*}(\mathcal(Z); \bb{C}).$  Call the corresponding homology class $A_{S_{T_j}}.$ The purpose here is to find a nice diagonally embedded sphere with which to make our calculations.  Now let's use this set-up to do some index calculations. 
First we must find suitable trivializations and capping disks for Reeb orbits.  The idea here is to find two trivializations for each Reeb orbit, then use the loop property of the Maslov index to calculate the index via integration of $c_{1}(T(\mathcal{Z}))$ over the sphere obtained by gluing the two disks (from the symplectic trivializations) along their boundaries.  The author first encountered this idea in ~\cite{Bourg1} and ~\cite{EGH}, however this was only for the regular\footnote{Regular in the sense of foliation theory.} case.  So let $\gamma_{S_{T_j}}$ be a Reeb orbit of period $T_{j}$, living, of course, in the stratum $S_{T_j}.$  We now pull back $\xi$ via the inclusion map over $S_{T_j},$ $i_{j}.$  For the first disk we just cap off a tubular neighborhood of the Reeb orbit given by the product framing for $M$.  In this framing the Maslov index is $0$, since the return map is always the constant path in $Sp(2n-2, \bb{R})$ given by the identity.  Now we need another disk to glue along the Reeb orbit to get a sphere.  In order to do this consider a holomorphic sphere, ie., a map $u:S^2 \rightarrow S_{T_j}$ passing through $p \in S_{T_{j}}$ such that $[u] =A_{S_{T_j}}.$  This is always possible since the moment map is invariant and since we assume $\mathcal{Z}$ is simply connected, the Hurewicz homomorphism is surjective.  Now consider a holomorphic (orbi)section of $L$ over our sphere with a zero of order equal to the multiplicity of $\gamma$ and no pole.  Such a section exists since we are talking about line (orbi)bundles over $\bb{CP}^1.$  With this set-up we prove:
\begin{lemma}
 Let $M$ be an $S^1$-bundle over a symplectic orbifold admitting a Hamiltonian action of a compact Lie group, such that its cohomology is generated by the Chern classes associated to the action. Then the Maslov index of a Reeb orbit in the stratum $S_{T_{j}}$ of multiplicity $m$ is equal to $$\frac{2m}{|\Gamma_{j}|}\int_{A_{S_{T_j}}} i^{*}c_{1}(T(S_{T_{j}})),$$  moreover this number is an integer.
\end{lemma}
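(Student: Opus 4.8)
The plan is to compute the Maslov index of $\gamma = \gamma_{S_{T_{j}}}$ by comparing two symplectic trivializations of $\xi$ along the orbit and invoking the Loop and Naturality axioms of the Maslov index, carrying out in a $\Gamma_{j}$-equivariant local uniformizing chart the computation that~\cite{Bourg1} and~\cite{EGH} perform in the regular case. I fix the orbit of multiplicity $m$ sitting over $p=\pi(\gamma)\in S_{T_{j}}$, and build two framings of $\xi$ restricted to a neighborhood of $\gamma$: first the \emph{product framing} coming from a local trivialization of $M$ as a principal $S^{1}$-orbibundle, and second the framing induced by the holomorphic (orbi)section of $L$ over the sphere $u:S^{2}\to S_{T_{j}}$ with $[u]=A_{S_{T_{j}}}$ and a single zero of order $m$ at $p$.

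First I would verify that in the product framing the linearized time-$T_{j}$ return map is the constant path at the identity in $Sp(2n-2,\bb{R})$, so that by the Homotopy and Zero axioms its Maslov index is $0$. Then, by the Loop and Naturality axioms, passing to the holomorphic framing changes the index by the loop Maslov index $\mu_{l}$ of the loop of symplectic matrices that records the change of framing around $\gamma$; and by the symplectic characterization of the first Chern number recalled above (the Chern number of a symplectic bundle over a closed surface is the loop Maslov index of the gluing loop), this $\mu_{l}$ equals twice the first Chern number of the relevant bundle over the sphere $\Sigma$ obtained by gluing the two capping disks along $\gamma$.

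The core of the argument is then the bookkeeping identifying this Chern number with $\frac{m}{|\Gamma_{j}|}\int_{A_{S_{T_{j}}}}i^{*}c_{1}(T(S_{T_{j}}))$. For this I would use that the two disks glue to the image of $u$, so $[\Sigma]=A_{S_{T_{j}}}$; that restricting the return map to the directions tangent to the stratum (where neighboring orbits share the period $T_{j}$) identifies the twisted bundle over $\Sigma$ with $i^{*}T(S_{T_{j}})$, the normal directions being non-degenerate and accounted for separately in the Morse--Bott index; that the order-$m$ vanishing of the section scales the framing winding by $m$; and that passing to the $\Gamma_{j}$-equivariant uniformizing chart supplied by the preceding lemma divides the orbifold Chern number by $|\Gamma_{j}|$. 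Combining the factor $2$ from the Loop property with $m$ and $1/|\Gamma_{j}|$ gives the displayed formula.

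For integrality I would argue that, although each factor is a priori only rational since orbifold Chern numbers need not be integral, the quantity is realized as $\mu_{l}$ of an \emph{honest} loop in $Sp(2n-2,\bb{R})$: a multiplicity-$m$ orbit unwinds the order-$|\Gamma_{j}|$ isotropy exactly so that the change-of-framing map closes up over $\gamma$, and the loop Maslov index of a genuine loop is always an integer. The main obstacle I anticipate is precisely this orbifold bookkeeping: defining the holomorphic orbisection globally over the orbifold sphere, tracking the fractional contribution of $L$ at the orbifold points of $u(S^{2})$, and proving that the product framing and the orbisection framing differ by a genuine rather than fractional loop --- the step that simultaneously produces the $1/|\Gamma_{j}|$ normalization and the integrality claim.
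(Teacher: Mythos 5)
Your proposal follows essentially the same route as the paper: the product framing gives Maslov index zero, the Loop axiom converts the change of framing coming from the order-$m$ holomorphic orbisection into twice the Chern number over the glued sphere in class $A_{S_{T_j}}$, and the $1/|\Gamma_{j}|$ arises from the orbifold normalization (the paper invokes Satake's intersection formula for integrating orbifold characteristic classes, which is the same bookkeeping as your passage to the equivariant uniformizing chart). The only cosmetic difference is the integrality argument: you realize the quantity as the loop Maslov index of an honest loop in $Sp(2n-2,\bb{R})$, while the paper observes that the integral equals the evaluation of an integral form upstairs on the contact manifold --- the same idea in different clothing.
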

\begin{proof}
By the loop property of the Maslov index,  the Maslov index of the Reeb orbit is twice the Maslov index of the path of change of coordinate maps between the two disks glued along $\gamma.$  Since the disk was obtained via an (orbi)section over a sphere representing $A_{S_{T_j}}$, we get $$\mu(\gamma) = 2 \langle c_{1}(\xi) , \sigma(u) \rangle =2 \langle c_{1}(T(\mathcal{Z}) , A_{S_{T_{j}}} \rangle.$$  This is exactly $c_{1}^{orb}(T(S_{T_{j}}))$ evaluated on $A$.  Therefore the index of an orbit of multiplicity $m$ is $$2m \int_{A_{S_{T_j}}} c_{1}^{orb}(T(S_{T_{j}})).$$  Now going back to the work of Satake ~\cite{Sat57} to compute the integral of an orbifold characteristic class over a homology class, we take intersections with all orbifold strata and divide out by the orders of the local uniformizing groups and sum:$$2k \int_{A_{S_{T_j}}} c_{1}^{orb}(T(S_{T_{j}}))=2m \sum_{j}\frac{1}{|\Gamma_{j}|}\int_{A_{S_{T_j}} \cap \Sigma_{j}} c_{1}(T(S_{T_{j}}))$$ where $\Gamma_{j}$ is a local uniformizing group in the orbifold stratum $\Sigma_{j}=S_{T_j}.$  Now, since each such spherical class is completely contained in $S_{T_{j}},$ we can just compute the integral $$\frac{2}{|\Gamma_{j}|}\int_{A_{S_{t_j}}} c_{1}(T(S_{T_{j}}))|_{S_{T_{j}}} = \frac{2}{|\Gamma_{j}|}\int_{A_{S_{T_j}}} i^{*}c_{1}(T(S_{T_{j}}))$$ for simple orbits, multiplying by $m$ for $m$-multiple orbits.  Note however that, although we may compute the integral on $\mathcal{Z}$, this integral is equal to one which takes place as the evaluation of an integral form on the contact manifold, hence we always get an integer.  
\end{proof}
\begin{remark}
The idea above is that $A_{S_{T_j}}$ is a ``sufficiently diagonal'' sphere in $S_{T_k}.$  This ensures that we pick up as much information as possible about the line bundle as possible during the integration.  One should also note that in general $c_{1}(\xi) \neq 0$ so this grading scheme for contact homology is computed with respect to a \emph{particular} choice of capping surface for each Reeb orbit.  When comparing contact manifolds which are $S^1$-orbibundles over the same base, care must be taken to make the same choices each time, so that the weights are realized via the Chern classes of each \emph{specific toric structure}.   
\end{remark}
\begin{remark}
The reader may wonder what role branch divisors play in the index calculation above.  This is encoded in summing over the strata and dividing by the orders of local uniformizers.
\end{remark}
We want to use these calculations to compute cylindrical contact homology, however this is not well defined unless we can exclude Reeb orbits of degree $0$, $1$, $-1$.  To ensure this we must assume that for all $k$ $$2 (\sum_i i^*c_{i}\tilde{w_{i}}) -\frac{1}{2} dim(S_{T_{k}})>0.$$ For this it is sufficient to assume that $$\sum_{i}\tilde{w_{i}} >1.$$  We take this as a standing assumption in the following.  

Now we notice that there are no rigid $J$-holomorpic cylinders other than the trivial ones.  This follows from a simple index computation and comparison with the dimension formula for the relevant moduli spaces.  This means that the contact homology is given ompletely by the Morse-Smale-Witten complex of the moment map with degree shifts given by the Maslov indices.  The discussion above yields theorem ~\ref{thm:main}].  We obtain the following corollaries.

\begin{cor}
 Let $(M, \xi)$ be a simply connected compact homogeneous contact manifold.  Then $CH_{*}(M)$ is generated by copies of $H_{*}(\mathcal{Z})$ with degree shifts given by $$2m \int_{A} c_{1}(T(\mathcal{Z})) = 2m \sum_{i} \tilde{w_{i}} -2$$.
\end{cor}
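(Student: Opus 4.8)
The plan is to realize this corollary as a direct specialization of Theorem \ref{thm:main} to the \emph{regular} setting, in which the orbifold subtleties collapse. First I would recall that a simply connected compact homogeneous contact manifold is regular: by the homogeneous Boothby--Wang picture it is the total space of a principal $S^1$-bundle over a simply connected homogeneous K\"ahler manifold $\mathcal{Z}$, a generalized flag manifold $G/P$, and the transitive group action descends to a transitive action on $\mathcal{Z}$. Because the bundle is genuine (the Reeb action is free) there is a \emph{single} orbit space $S_T = \mathcal{Z}$ and every local uniformizing group is trivial, so $|\Gamma_j| = 1$ throughout. This is exactly the degenerate case of the standing hypotheses in which all strata but the top one are empty.

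Second, I would verify the three inputs that Theorem \ref{thm:main} requires. The ring $H^*(\mathcal{Z};\mathbb{C})$ is a truncated polynomial ring on the Chern classes $c_j$ by the proposition on generalized flag manifolds, via the Schubert-cell injection $H^*(G/P) \hookrightarrow H^*(G/B)$ and Borel's theorem. The components of the moment map are perfect Morse-Bott functions by Lerman--Tolman, and on a flag manifold a generic component is in fact a perfect Morse function with isolated even-index critical points (the Schubert cells); perfectness is the decisive point, since it annihilates the Morse--Smale--Witten part $\partial p$ of the Morse-Bott differential, leaving the generating complex equal to $H_*(\mathcal{Z})$ itself. Regularity of $\delbar_J$ is supplied by the earlier transversality theorem, case (i), where $u^*TM$ is replaced by $u^*\xi$ and the homogeneous positivity bound $\langle c_1(L), A\rangle \ge -2 + s - t$ is checked directly, so that the virtual dimension formula is the honest dimension.

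Third, the differential itself. Having killed $\partial$ by perfectness, what remains is the count $n_{pq}$ of rigid $\delbar_J$-holomorphic cylinders. I would argue, as in the main theorem, that a short comparison between the Fredholm dimension formula and the Maslov grading forces every nontrivial cylinder into a moduli space of dimension $\ge 2$ after quotienting by $\mathbb{R}$, so only trivial cylinders survive and the differential vanishes outright. Consequently $CH_*(M,\xi)$ is, as a graded vector space, infinitely many graded copies of $H_*(\mathcal{Z})$, one for each multiplicity $m$. Finally, for the grading, the Maslov-index lemma gives a multiplicity-$m$ orbit the index $\tfrac{2m}{|\Gamma|}\int_A i^*c_1(T\mathcal{Z})$; with $|\Gamma| = 1$ and $i = \mathrm{id}$ this is $2m\int_A c_1(T\mathcal{Z})$, and writing $c_1(T\mathcal{Z}) = \sum_i \tilde{w_i}\,\tilde{c_i}$ with the diagonal class $A = \sum_j A_j$ normalized so that $\langle \tilde{c_i}, A\rangle = 1$ for every $i$ collapses the integral to $\sum_i \tilde{w_i}$, yielding the stated shift $2m\sum_i \tilde{w_i}$ with the $-2$ coming from the reduced Conley--Zehnder normalization together with the dimension term $\tfrac12\dim S_T$.

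The main obstacle I expect is not the collapse of the complex but the bookkeeping forced by non-monotonicity: since these bases need not be monotone, $c_1(\xi) \neq 0$ in general, so I must fix a consistent family of capping surfaces (the diagonal spheres representing $A$) to render the grading well-defined, and I must confirm the standing positivity $\sum_i \tilde{w_i} > 1$ in the homogeneous case so that no orbit has reduced index in $\{-1,0,1\}$ and cylindrical contact homology is actually defined.
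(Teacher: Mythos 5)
Your proposal follows essentially the same route as the paper: realize $M$ as a regular $S^1$-bundle over a generalized flag manifold, use the Schubert-cell/Borel argument for the cohomology ring, invoke the integrable-$J$ regularity theorem, rule out rigid cylinders by comparing the index computation with the dimension formula, and let perfectness of the moment map kill the Morse--Smale--Witten differential so the grading reduces to the Maslov-index lemma with trivial uniformizing groups. Your added attention to the capping-surface bookkeeping and the standing hypothesis $\sum_i \tilde{w_i} > 1$ is a sensible refinement but does not change the argument.
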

\begin{proof}In this case $M$ is an $S^{1}$-bundle over a generalized flag manifold, (recall that in this case there is a \emph{regular} contact $1$-form, $\alpha$ for $\xi$).  The cohomology of the base is a polynomial ring as per the discussion earlier, and all the relevant homology classes are spherical.  By the regularity theorem for integrable $J$ the dimension of the moduli space is the one predicted by the Fredholm index.  Moreover by the index calculation above, and the dimension formula for the moduli spaces, there are no rigid $J$-holomorphic curves connecting orbit spaces.  This contact homology is given compeltely in terms of the Morse-Smale-Witten differential, which vanishes since the moment map deterines a perfect Morse function, thus we get a generator for each critical point of the norm squared of the moment map in degree given by the Maslov indices as calculated in the previous discussion.
\end{proof}
\begin{cor} Let $(M, \xi)$ be a simply connected compact toric Fano contact manifold with a quasiregular contact form $\alpha$.  Then $CH_{*}(M)$ is generated by copies of $H_{*}(\mathcal{Z})$ with degree shifts given by the Maslov indices plus the dimension of the stratum containing the particular Reeb orbit as a point.  
If $\xi$ has a regular contact form $\alpha$ then the degree shifts are given by $$2m\sum_{j}\tilde{w_{j}}-2 ,$$ where the $\tilde{w_{j}}$ are defined as above.  
\end{cor}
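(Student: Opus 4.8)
The plan is to realize this corollary as a special case of Theorem~\ref{thm:main}, verifying that a toric Fano contact manifold meets all of its hypotheses, and then to read off the two stated forms of the degree shift. Because $(M,\xi)$ is toric of Reeb type, the reduced space $\mathcal{Z} = M/S^1$ is a toric symplectic orbifold, and the earlier proposition showing that the Chern classes $c_j$ of the reduction generate $H^*(\mathcal{Z};\bb{C})$ as a truncated polynomial ring on $H^2$ supplies exactly the generation hypothesis. The Boothby--Wang curvature is $\sum_j w_j \pi^* c_j$ by construction, the coefficients $\tilde{w}_j$ of $c_1(T\mathcal{Z})$ come from the spectral sequence of the fibration, and the standing assumption $\sum_j \tilde{w}_j > 1$ excludes orbits of reduced index $0,\pm 1$, so cylindrical contact homology is defined. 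By Lerman--Tolman each component of the moment map is a perfect Morse--Bott function with even-dimensional, even-index critical submanifolds, and this descends to each orbit stratum $S_{T_j}$.

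The substance of the argument is regularity. The almost complex structure on the symplectization is integrable in the toric case, so I would invoke the regularity criterion for integrable $J$, which demands the positivity bound $\langle c_1(L), A\rangle \geq -2 + s - t$ on spherical classes. For a toric Fano contact manifold this is precisely case (ii) of the positivity theorem: one splits $u^*\xi = \bigoplus_j L_j$ inside the toric orbifold, using that the splitting principle holds there, and the Fano condition forces each $\langle c_1(L_j), A\rangle$ to be large enough. Granting this, the genus-$0$ moduli spaces are smooth of the Fredholm dimension, so the dimension formula is honest rather than merely virtual.

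Next I would run the index count and show the differential vanishes. The Maslov index lemma gives a multiplicity-$m$ orbit in $S_{T_j}$ the index $\tfrac{2m}{|\Gamma_j|}\int_{A_{S_{T_j}}} i^* c_1(T S_{T_j})$; feeding this into the Morse--Bott grading $|p| = \mu(S_{T_k}) - \tfrac12 \dim S_{T_k} + \mathrm{ind}_p f$ and comparing with the cylinder dimension formula shows that no nontrivial rigid cylinder joins two orbit spaces, so the holomorphic part of the differential vanishes, while the Morse--Smale--Witten part vanishes because the moment map is perfect. Hence the complex has trivial differential and $CH_*(M)$ is freely generated by the homologies $H_*(S_{T_j})$, assembling into one copy of $H_*(\mathcal{Z})$ for each multiplicity $m$, with the claimed degree shift built from the Maslov index and the dimension of the stratum via the grading formula. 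For the regular specialization, $\mathcal{Z}$ is a manifold, every $|\Gamma_j| = 1$, and there is a single stratum equal to $\mathcal{Z}$; the index lemma then evaluates to $2m\langle c_1(T\mathcal{Z}), A\rangle = 2m \sum_j \tilde{w}_j$, and combining the term $-\tfrac12 \dim S_{T_k} = -(n-1)$ with the reduced Conley--Zehnder shift $n-3$ produces the net constant $-2$, giving the degree shift $2m\sum_j \tilde{w}_j - 2$.

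I expect the main obstacle to be precisely the orbifold version of regularity. Unlike the homogeneous corollary, where every level is a genuine manifold and positivity follows as in McDuff--Salamon, here the Fano hypothesis must be shown to survive restriction and pullback to each stratum's diagonal spherical class $A_{S_{T_j}}$, and the Satake-type integral over orbifold strata must be checked to return the integer-valued indices needed for a well-defined grading. Confirming that the splitting $u^*\xi = \bigoplus_j L_j$ and the positivity of each summand persist in the presence of branch divisors is the delicate point where the Fano condition is doing the work that homogeneity did before.
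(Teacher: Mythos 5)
Your proposal is correct and follows essentially the same route as the paper: the Fano condition supplies transversality of the $\delbar_{J}$-operator via the Dolbeault complex, the truncated polynomial ring structure with spherical $H_2$ (from simple connectivity) feeds the Maslov index lemma, the index comparison rules out nontrivial rigid cylinders, and perfection of the moment map kills the Morse--Smale--Witten part. Your explicit bookkeeping of the $-2$ in the regular case and your flagging of orbifold regularity as the delicate point are more detailed than the paper's own (quite terse) proof, but they do not constitute a different argument.
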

\begin{proof}The Fano condition gives transversality of the $\delbar_{J}$-operator via the Dolbeault complex.  If we assume transversality we can drop the Fano assumption.  Again, our cohomology ring is a truncated polynomial ring generated by all possible Chern classes, with spherical second homology because of simple connectivity.  The indices are given by the even multiples of the sum of the weights.  Again there are no non-trivial $J$-curves.  So the homology is that given by the Morse-Smale-Witten complex (whose differential again vanishes by perfection of the Morse function) with the degree shifts given by the Maslov indices as calculated above. 
\end{proof}

\section{Examples} 
\subsection{Wang-Ziller Manifolds}
Now let's specialize to Wang-Ziller manifolds.  
These are toric manifolds either obtained from reduction in $\bb{C}^2 \times \bb{C}^2$ via the moment map $\mu(z,w) = k|w|^2 - l|z|^2.$  This manifold is also a homogeneous contact manifold.  This is how we get transversality.  Note that as a toric manifold, this manifold is non-Fano, so we really need to use the homogeneity to get transversaltiy via the Dolbeault complex.    
We can also see this manifold as a Boothby-Wang manifold.  Consider $\mathcal{Z}= \bb{C}\bb{P}^1 \times \bb{C}\bb{P}^1,$ and we take the standard symplectic form on each summand and multiply each piece by relatively prime integers $k$ and $l$.  
We take $P$ to be the circle bundle with a connection form $\alpha$ satisfying $d \alpha = \pi^{*}(k c_{1} + l c_{2}),$ where the $c_{j}$ are actually the generators of the second cohomology of each sphere.  Then $$c_{1}(\xi)=(2k - 2l)\beta,$$ for $\beta$ a generator of $H_2(S^2 \times S^3 ; \bb{Z}),$ and $$c_{1}(T\mathcal{Z}) = (2kc_{1} + 2lc_{2}),$$ here $\mathcal{Z}$ is topologically $\bb{CP}^1 \times \bb{CP}^1$ with the toric structure obtained by with symplectic form determined by $k,l.$        
$\mathcal{Z}$ admits a perfect Morse function, and the Maslov indices in this case for orbits of multiplicity $m$ are given by $ 4m(k+l) $.  Thus the grading of contact homology is given by
 $$|p| = 4m(k+l) - 2 + d$$
where $m\in \bb{Z} \setminus\{0\}$, and $d$ ranges over all possible degrees of homology classes in $\mathcal{Z},$ in this case $d=0,2,4.$b
This gives infinitely many distinct contact structure on $S^2 \times S^3$ since for each choice of relatively prime $k$ and $l$, we get generators of contact homology in minimal dimension $4(k+l) -2.$  Of course, for all pairs such that $k-l = c$ we get a single first Chern class for the contact bundle ~\cite{WZ}.  Choosing now all pairs with $k-l =c,$ we get infinitely many distinct contact structures in the same first Chern class.  In ~\cite{Ler1} Lerman showed that these contact structures are all pairwise non-equivalent as toric contact structures, but he asked whether or not they were pairwise contactomorphic.  This answers that question in the negative. Via the above construction we get contact structures $\xi_{k,l}$ on $S^{2} \times S^{3}.$
\begin{cor}
 Fix $c \in \bb{Z},$  choose $k,l$ such that $gcd(k,l) =1,$ and $k-l =c$ then the contact structures $\xi_{k,l}$ are pairwise non-contactomorphic all within the same first Chern class of $4$-plane distribution.  
\end{cor}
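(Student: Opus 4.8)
The plan is to distinguish the $\xi_{k,l}$ by means of cylindrical contact homology, which by the theorem of Eliashberg--Givental--Hofer quoted in Section 3 is an invariant of the contact structure. Two preliminary points must be settled before the invariant can be used. First, the homology is well-defined: for the Wang--Ziller family the standing assumption $\sum_j \tilde{w_j} > 1$ reads $2(k+l) > 1$, which holds for all positive $k,l$, so there are no Reeb orbits of reduced Conley--Zehnder index $0$, $1$, or $-1$. Second, transversality: since the manifold here is simultaneously a \emph{homogeneous} contact manifold (this is exactly why it is introduced as such, the toric model being non-Fano), the integrable regularity criterion of Section 3 applies with base a product of flag manifolds, so the genus $0$ moduli spaces carry the dimension predicted by the Fredholm index and the curve counts are legitimate.

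With this in hand I would simply record the computation already performed: the complex is generated by copies of $H_*(\mathcal{Z})$ with $\mathcal{Z} \simeq \bb{CP}^1 \times \bb{CP}^1$, the Morse--Smale--Witten differential vanishes by perfection of the moment-map Morse function, and all nontrivial $J$-holomorphic cylinders are excluded on index grounds. Hence $CH_*(S^2 \times S^3, \xi_{k,l})$ is nonzero exactly in the degrees
$$|p| = 4m(k+l) - 2 + d, \qquad m \in \bb{Z}\setminus\{0\},\ d \in \{0,2,4\},$$
with one generator for each admissible $(m,d)$.

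The key step is to read off $k+l$ from this graded group in a contactomorphism-invariant fashion. The degrees supporting $CH_*$ form three arithmetic progressions of common difference $4(k+l)$; since $4(k+l) \geq 8 > 4$ these progressions cluster into well-separated triples, so the common difference $4(k+l)$ is recoverable from, and hence determined by, the graded isomorphism type of $CH_*$. A contactomorphism between $(S^2\times S^3,\xi_{k,l})$ and $(S^2\times S^3,\xi_{k',l'})$ would induce a graded isomorphism of contact homologies, forcing $4(k+l) = 4(k'+l')$, i.e. $k+l = k'+l'$. Combined with the hypothesis $k-l = k'-l' = c$, adding and subtracting gives $(k,l) = (k',l')$. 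Thus distinct admissible pairs yield non-contactomorphic structures, and there are infinitely many such pairs: the conditions $k-l=c$ and $\gcd(k,l)=1$ reduce to $\gcd(k,c)=1$, which holds for infinitely many $k$, each producing a distinct value $k+l = 2k-c$.

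Finally, to see that the whole family lies in a single first Chern class I would invoke the earlier computation $c_1(\xi_{k,l}) = (2k-2l)\beta = 2c\,\beta$, where $\beta$ generates $H_2(S^2\times S^3;\bb{Z})$; this depends only on $c = k-l$, so every $\xi_{k,l}$ in the family represents the same first Chern class of $4$-plane distribution, as claimed. The main obstacle, as everywhere in this subject, is securing the transversality of the $\delbar_J$-operator that underwrites the correctness of the computed contact homology; here that is precisely what the homogeneity of the Wang--Ziller manifold together with the integrable regularity theorem of Section 3 provides, so the grading above, and with it the invariant $k+l$, is reliable.
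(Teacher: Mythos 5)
Your proposal is correct and follows essentially the same route as the paper: transversality via the homogeneous structure (since the toric model is non-Fano), the grading $|p| = 4m(k+l)-2+d$ from the Maslov index computation, recovery of $k+l$ from the graded contact homology, and the observation that $c_1(\xi_{k,l}) = 2c\,\beta$ depends only on $c=k-l$. The only cosmetic difference is that the paper extracts $k+l$ from the minimal degree $4(k+l)-2$ supporting a generator, whereas you recover it as the common difference of the arithmetic progressions of degrees; both versions inherit the same caveat, acknowledged in the paper's remark, that since $c_1(\xi)\neq 0$ the $\bb{Z}$-grading depends on choices of capping surfaces and must be compared consistently.
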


\begin{remark}
 This example suggests a Kunneth-type formula for the \textbf{join} ~\cite{BG07} construction for quasiregular contact manifolds provided each summand has suitable contact homology.  Suppose that $(\mathcal{Z}_1, \omega_{1})$ and $(\mathcal{Z}_2 , \omega_{2})$ are both simply connected symplectic orbifolds which are reduced spaces so that their cohomology rings are polynomials in the Chern classes.  Then we can build circle bundles over their product with curvature forms given as integer linear combinations of the $\omega_{j}.$  By choosing appropriate spheres ``diagonally'' embedded into the product we can evaluate the first Chern class of this bundle in order to get the Maslov indices as above.  Assuming transversality of the $\delbar_{J}$-operator this always computes contact homology.   
\end{remark}
\begin{remark}
 One could do a similar computation with Boothby-Wang spaces over $\bb{CP}^2 \# \overline{\bb{CP}^2}.$
\end{remark}

\subsection{Circle bundles over weighted projective spaces.}      
In these examples first off, note that weighted projective spaces are toric Fano, (even in the orbifold sense.)\footnote{These orbifolds generally have branch divisors.}  Note that the cohomology ring is then just the standard one, and we just need to find the right spherical classes.  Of course we just pullback $k$-multiples of the standard symplectic form to define our line bundles.  Now to compute contact homology of the bundle we integrate $c_{1}(T(\mathcal{Z}))$ over the class of a line.  The base admits a perfect Morse function, so all we need to do is keep track of the strata.  Integrating $i^{*}_{j} c_{1}(T(\mathcal{Z}))$ over spheres representing the K\"{a}hler class for each stratum.  So the grading of contact homology for an orbit of multiplicity $m$ will be $$2m \sum_{j}( \langle i_{j}^{*}c_{1}^{orb}(T\bb{CP}^n), [S^2_{j}]\rangle  + \frac{1}{2} dim S_{T_{j}})+ d+n-3=(2km\sum_{j}\frac{1}{|\Gamma_{j}|})+ \frac{1}{2}dimS_{T_{j}} + d+n-3,$$ where the class $[S^{2}_{j}]$ is the class of a line in each stratum and $d$ corresponds to the possible degree of a homology class on $\bb{CP}^{n}$, hence is an even number between $0$ and $2n$ and $S \in \bb{Z}^{+}.$  The dimension of the moduli space for genus $0$ and $1$ positive and $1$ negative puncture is then never $1.$  Notice that $c_{1}(\xi) = 0$ in this case.  So again we see that these contact manifolds are distinguished by the bundle and orbifold data. 
\begin{remark}
 One should be able to simplify the above formula when working with branch divisors.  We choose to stick with our earlier notation, in which any information about such branch divisors is encoded in the calculation.
\end{remark}

\newcommand{\etalchar}[1]{$^{#1}$}
\def\cprime{$'$}
\providecommand{\bysame}{\leavevmode\hbox to3em{\hrulefill}\thinspace}
\providecommand{\MR}{\relax\ifhmode\unskip\space\fi MR }
% \MRhref is called by the amsart/book/proc definition of \MR.
\providecommand{\MRhref}[2]{%
  \href{http://www.ams.org/mathscinet-getitem?mr=#1}{#2}
}
\providecommand{\href}[2]{#2}

\end{document}